\documentclass[12pt]{amsart}

\usepackage{mathrsfs}
\usepackage{amsfonts}
\usepackage{amssymb}
\usepackage{amsfonts, amscd, amsmath, mathrsfs, amssymb, amsthm, amsxtra, bbding, epsfig, graphicx, latexsym, url}
\usepackage[papersize={8.2in,11in},textwidth=16.2cm,textheight=22cm,centering]{geometry}
\usepackage{enumerate}
\usepackage{caption} 

\usepackage{mathtools}
\usepackage{enumitem}
\usepackage{adjustbox}
\usepackage{microtype}

\usepackage{graphicx}
\usepackage{tikz}
\usepackage{float}

\usepackage{xcolor}
\definecolor{cite}{rgb}{0.00,0.60,1.00}
\definecolor{url}{rgb}{1.00,0.10,0.80}
\definecolor{link}{rgb}{0.00,0.00,1.00}
\usepackage[colorlinks,linkcolor=link,urlcolor=url,citecolor=cite,pagebackref,breaklinks]{hyperref}

\hypersetup{
	pdfstartpage=1,
	pdfstartview=FitH}

\DeclareFontFamily{U}{mathx}{\hyphenchar\font45}
\DeclareFontShape{U}{mathx}{m}{n}{
	<5> <6> <7> <8> <9> <10>
	<10.95> <12> <14.4> <17.28> <20.74> <24.88>
	mathx10
}{}
\DeclareSymbolFont{mathx}{U}{mathx}{m}{n}
\DeclareMathAccent{\widecheck}{\mathalpha}{mathx}{"71}

\numberwithin{equation}{section}

\allowdisplaybreaks

\newtheorem{theorem}{Theorem}[section]
\newtheorem{lemma}{Lemma}[section]
\newtheorem{problem}{Problem}[section]
\newtheorem{definition}{Definition}[section]

\makeatletter
\newcounter{roem}
\renewcommand{\theroem}{\Roman{roem}}

\newcommand{\c@org@eq}{}
\let\c@org@eq\c@equation
\newcommand{\org@theeq}{}
\let\org@theeq\theequation

\newcommand{\setroem}{
	\let\c@equation\c@roem
	\let\theequation\theroem}

\newcommand{\setarab}{
	\let\c@equation\c@org@eq
	\let\theequation\org@theeq}
\makeatother

\newtheorem{remark}{\bf Remark}

\DeclareMathOperator{\Mod}{mod}

\renewcommand{\bmod}[1]{\,(\Mod{ #1})}

\newcommand{\F}{\mathbb{F}}
\newcommand{\N}{\mathbb{N}}

\newcommand{\bb}{\mathbf{b}}

\newcommand{\cA}{\mathcal{A}}

\newcommand{\cC}{\mathcal{C}}

\newcommand{\cE}{\mathcal{E}}
\newcommand{\cF}{\mathcal{F}}

\newcommand{\cM}{\mathcal{M}}

\newcommand{\cP}{\mathcal{P}}

\newcommand{\cS}{\mathcal{S}}

\newcommand{\cW}{\mathcal{W}}

\def\le{\leqslant}
\def\leq{\leqslant}
\def\ge{\geqslant}
\def\geq{\geqslant}

\usepackage{graphicx}
\usepackage{tikz}

\title{An asymptotic Sidon basis of order $3-\eta$}
\author{Wei Niu}
\address{(Wei Niu) School of Mathematics and Statistics, Xi’an Jiaotong University, Xi’an 710049, P. R. China.}
\email{wei.niu@stu.xjtu.edu.cn}

\begin{document}
	
	\begin{abstract}
		\normalsize
		Pilatte recently proved that there exists an infinite Sidon set of positive integers which is an asymptotic basis of order $3$, answering a problem posed by Erd\H{o}s, S\'ark\"ozy and S\'{o}s in 1994.
		In this paper, we strengthen this result by proving that for any $0<\eta<0.0527$, there exists an infinite Sidon set $\mathcal{S}\subset \mathbb{N}$ which is an asymptotic basis of order $3-\eta$; that is, every sufficiently large integer $m$ can be represented as
		\[
		m=s_1+s_2+s_3
		\]
        for some $s_1,s_2,s_3\in \mathcal{S}$ satisfying
		\[
		\min\{s_1,s_2,s_3\}\leq m^{1-\eta}.
		\]
		To prove this, we develop a truncated version of Pilatte's construction and use a deep result of Sawin on sums of Dirichlet convolutions of the von Mangoldt function over function fields.
	\end{abstract}

\subjclass[2020]{11B13, 11R58, 11N13, 05D40}
\keywords{Sidon sets, asymptotic bases, probabilistic method,
additive combinatorics, function fields}

	\maketitle

	\section{Introduction}
	\subsection{Sidon sets}
	A subset $\cS$ of the natural numbers is called a Sidon set if all sums $s_1+s_2$, with $s_1,s_2\in \cS$ and $s_1\le s_2$, are distinct. Introduced by Sidon \cite{Sid32} in 1932, Sidon sets are now widely regarded as one of the central objects in additive combinatorics. For historical background, one can refer to the excellent book of Halberstam and Roth \cite{HR83}, the survey of O'Bryant \cite{OBr04}, and also the book of Guy \cite[C9]{Gu04}.
	
	A fundamental question is to determine how large a Sidon set can be. For  the finite case, let $f(n)$ be the largest cardinality of a Sidon set taken from $\{1,2,\ldots,n\}$.
	It is well known that 
	\begin{align}\label{eq:asy:f}
	f(n)\sim n^{1/2},\ n\to+\infty.
	\end{align}
The upper bound is due to Erd\H{o}s and Tur\'an \cite{ET41}, while the lower bound follows from the  constructions of Singer
\cite{Sin38} and Bose \cite{Bo42}. The asymptotic formula \eqref{eq:asy:f} was recorded
by Chowla \cite{Ch44} and by Erd\H{o}s \cite{Er44}. Moreover,  Erd\H{o}s \cite{Er94} conjectured that for any $\varepsilon>0$,
	\begin{align*}
		f(n)=n^{1/2}+O_{\varepsilon}(n^\varepsilon).
	\end{align*}
	The lower bound of $f(n)$ is related to the existence of primes in short intervals, see, for example, \cite[Section~4.1, p.~9; see also p.~18]{OBr04}. For the upper bound of $f(n)$,  an explicit form was later given by Lindstr\"om \cite{Li69}, and then by Ruzsa \cite{Ru93}, Graham \cite{Gr96} and Cilleruelo \cite{Ci10}. Recently, Balogh, F\"uredi and Roy \cite{BFR23} and O'Bryant \cite{OBr25} obtained new upper bounds for $f(n)$. See Erd\H{o}s Problem~\#30 in
	Bloom's online database \cite{Bl} of Erd\H{o}s problems for more details.
	
	The infinite case  seems more difficult. For any subset $\cA\subset \N$, let $\cA(x)$ denote the number of elements of $\cA$ not exceeding $x$. The greedy algorithm shows that there is an infinite Sidon set $\cS$ such that $\cS(x)\gg x^{1/3}$.
	Ajtai, Koml\'os and Szemer\'edi \cite{AKS81} proved that there is an infinite Sidon set $\cS$ such that 
	$$
	\cS(x)\ge 10^{-3}(x\log x)^{1/3}
	$$
	for all sufficiently large $x$. The exponent-$1/3$ barrier was overcome by Ruzsa \cite{Ru98}, who proved that there is an infinite Sidon set $\cS$ such that 
	$$
	\cS(x)= x^{\sqrt{2}-1+o(1)},
	$$
	which is still the best-known result. Cilleruelo \cite{Ci14} later gave an alternative proof based on an explicit construction. It is still a major open problem \cite{Er56} to determine whether there is an infinite Sidon set $\cS$ satisfying 
	$$
	\cS(x)\gg x^{1/2-\varepsilon}
	$$
	for all $\varepsilon>0$. See Erd\H{o}s Problem \#39 \cite{Bl} for more details.
	On the other hand, Erd\H{o}s showed that $\cS(x)$ could be very close to $\sqrt{x}$ for infinitely many $x$. More precisely, he proved that
    there exists an infinite Sidon set $\cS$ such that
	$$
\limsup_{x\rightarrow+\infty}\frac{\cS(x)}{\sqrt{x}}\ge \frac{1}{2}.
	$$
	He also proved that \begin{align}\label{equation-1}
\liminf_{x\rightarrow+\infty}\frac{\cS(x)\sqrt{\log x}}{\sqrt{x}}<+\infty
	\end{align}
	for any infinite Sidon set $\cS$.
    See \cite{St55} for details.
 Kr\"uckeberg \cite{Kr61} later improved the constant $1/2$ to $1/\sqrt{2}$, a bit closer to the constant $1$ as conjectured by Erd\H{o}s \cite{Er80}. More details can be found in Bloom's list of Erd\H{o}s Problem \#329 \cite{Bl}.

\subsection{Asymptotic Sidon bases}
	A set $\cA\subset\N$ is called an \textit{asymptotic basis of order $k\geq 1$} if every sufficiently large integer can be written as the sum of $k$ elements of $\cA$. Clearly, it follows from (\ref{equation-1}) that no Sidon set can be an asymptotic basis of order $2$. 
	Erd\H{o}s, S\'ark\"ozy and S\'{o}s asked the following problem in \cite[Problem~14]{ESS94a}; see also \cite[Problem~8]{ESS94b}:
	\begin{problem}[Erd\H{o}s--S\'ark\"ozy--S\'{o}s, 1994]\label{problem-1}
		Does there exist an infinite Sidon set which is also an asymptotic basis of order $3?$
	\end{problem}
	The same problem was asked repeatedly by Erd\H{o}s \cite{Er94} and S\'ark\"ozy \cite[Problem 32]{Sar01}. 
	Deshouillers and Plagne \cite{DP09} gave an affirmative answer to Problem \ref{problem-1}  with the order $7$ in place of $3$. The constant $7$ was refined by Kiss \cite{Ki10} to $5$, by Kiss, Rozgonyi and S\'andor \cite{KRS14} to $4$, and by Cilleruelo \cite{Ci15} to $3+\varepsilon$ for any $\varepsilon>0$. Here, an infinite Sidon set $\cS$ being an asymptotic basis of order $3+\varepsilon$ means that every sufficiently large integer $n$ has a representation
	$$
	n=s_1+s_2+s_3+s_4, \quad s_i\in \cS \quad( 1\le i\le 4),
	$$
	where the smallest summand above is at most $n^\varepsilon$. Later, Kiss and S\'andor \cite{KS23} proved that there exists an infinite Sidon set $\cS$ such that the $3$-fold sumset $\cS+\cS+\cS$ contains a positive proportion of natural numbers.  
    
    Several of these results \cite{Ci15,Ki10,KRS14,KS23} are based on the probabilistic method introduced by Erd\H{o}s
	and R\'enyi \cite{ER60}: construct a random set $\cA$ by selecting each integer $n\in \N$ independently with some probability $p(n)$, and then use standard probabilistic inequalities, together with the Borel--Cantelli lemma to show that $\cA$ satisfies the desired properties with probability one. See the book of Halberstam and Roth \cite{HR83} for more details.

    In a recent breakthrough, Pilatte \cite{Pi24} solved Problem \ref{problem-1} in the affirmative,  starting from Cilleruelo's construction of infinite Sidon sets \cite{Ci14}.
    Pilatte constructs a carefully designed auxiliary set $\cA$ (see Lemma \ref{lem:aux}) by using the alteration method
    \footnote{The alteration method starts with a random structure that has almost all the desired properties, and then removes a small number of bad configurations to obtain the required structure. See \cite[Chapter 3]{AS16} for more details.}. The elements of $\cA$ play
    the same separating role as the fixed number $4$ and the range of $x_j(p)$ in Cilleruelo's
    construction\footnote{The construction can be found at \cite[p.478]{Ci14}.}, while the structure of $\cA$ provides enough flexibility to
    ensure that the resulting set is an asymptotic basis of order $3$.  Pilatte's proof also relies on  a deep result of Sawin \cite{Saw24} on sums of Dirichlet convolutions of von Mangoldt functions over function fields. We use the same result of
	Sawin in a different form, recorded below as Lemma \ref{lem:sawin}.
	
	In fact, Pilatte's arguments yield the following quantitative version: Every sufficiently large integer $m$ can be written as
	$$
	m=s_1+s_2+s_3,
	$$
where
	each summand $s_i$ in the representation  of $m$ satisfies
	$$
	me^{-O((\log m)^{1/2})}\leq s_i\le m.
	$$ See \cite[Lemma 3.4 (ii)]{Pi24} and \cite[Proof of Lemma 5.3]{Pi24}.
	Motivated by this observation and by the $3+\varepsilon$ result of Cilleruelo \cite{Ci15}, Ping Xi asked the author the following $3-\eta$ problem:
    
	\begin{problem}[$3-\eta$]\label{problem-2}
		Find an admissible constant $\eta>0$, as large as possible, such that there exists an infinite Sidon set $\cS\subset \N$ which is an asymptotic Sidon basis of order $3-\eta$, that is, all sufficiently large integers $m$ can be represented as
		\[
		m=s_1+s_2+s_3,\quad s_1,s_2,s_3\in \cS,
		\]
		where
		\[
		\min\{s_1,s_2,s_3\}\leq m^{1-\eta}.
		\]
	\end{problem}
We give a positive answer for a small but explicit range of $\eta$.
	
	\begin{theorem}\label{thm:main}
	For any $0<\eta<0.0527,$ there exists an infinite Sidon set $\cS\subset \N$ which is an asymptotic Sidon basis of order $3-\eta$.
	\end{theorem}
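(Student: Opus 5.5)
The plan is to adapt Pilatte's construction so that one of the three summands is forced to be small. Pilatte's set is built in the spirit of Cilleruelo: to each prime $p$ one attaches a Sidon-type set. Its elements are encoded through discrete logarithms in $\mathbb{F}_p^\times$, or, in the function-field version, through irreducible polynomials over $\F_q$. The encoding proceeds digit by digit, and the auxiliary set $\cA$ of Lemma \ref{lem:aux} governs the separating digits. We will construct a \emph{truncated} version of this. The set $\cS$ will be the union of two families.

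The first family, $\cS_{\mathrm{big}}$, is built at scale $m$ exactly as in Pilatte. The second, $\cS_{\mathrm{small}}$, is built using only primes, or equivalently polynomial degrees, that produce elements of size at most $m^{1-\eta}$. The two families are placed in disjoint digit blocks, using elements of $\cA$ as separators, so that the Sidon property of $\cS=\cS_{\mathrm{big}}\cup\cS_{\mathrm{small}}$ reduces to the Sidon property inside each block together with the separation argument of Cilleruelo \cite{Ci14}.

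\textbf{Step 1 (Sidon property).} Verify that $\cS$ is Sidon. Any coincidence $s_1+s_2=s_3+s_4$ must respect the digit decomposition, because the separating digits from $\cA$ prevent carries. Such a coincidence therefore reduces, coordinate by coordinate, to an equation of the form $\log a+\log b=\log c+\log d$ in a finite field. This forces $\{a,b\}=\{c,d\}$, as in \cite{Pi24}.

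\textbf{Step 2 (representation).} Given a large $m$, fix the small summand $s_1\in\cS_{\mathrm{small}}$ with $s_1\le m^{1-\eta}$. It must be chosen so that $m-s_1$ lies in the set of integers whose representations $s_2+s_3$ are counted by Pilatte's argument with the constraint relaxed. Concretely, the condition that $m$ is a sum of three elements becomes a system of conditions on the digits. In one block we need irreducible polynomials $P_1,P_2,P_3$ of prescribed degrees with $P_1P_2P_3\equiv$ a prescribed residue, or the corresponding additive condition. Here $P_1$ is now restricted to a much smaller degree, about a $(1-\eta)$ fraction of the full degree in logarithmic scale. We count the solutions with Lemma \ref{lem:sawin}, Sawin's bound for sums of $\Lambda*\Lambda*\Lambda$ in arithmetic progressions over $\F_q[t]$, with unbalanced degrees. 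The main term is of size $q^{\deg}$ divided by the modulus, and Sawin's error term saves a power $q^{-\delta n}$ whenever each factor's degree exceeds a fixed fraction of the modulus degree. The truncation costs roughly a factor $q^{-\eta n}$ in the main term. Positivity therefore holds as long as $\eta$ is smaller than Sawin's saving, adjusted for the alteration losses coming from Lemma \ref{lem:aux}. Optimizing these exponents should produce the numerical threshold $0.0527$.

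\textbf{Step 3 (alteration and Borel--Cantelli).} The random choices (the set $\cA$, and the random shifts $x_j(p)$) must be made so that, almost surely, every large $m$ has at least one representation surviving the deletions. The plan is to show that the expected number of representations is at least $m^{c}$ for some $c>0$. Janson's inequality or a second-moment bound then gives failure probabilities summable in $m$, and Borel--Cantelli finishes the argument.

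The main obstacle is Step 2: Sawin's equidistribution must be applied in a \emph{lopsided} regime, where one factor has degree only a $(1-\eta)$-fraction of the natural size. We must check that the power saving still beats both the main-term loss and the density losses of the truncated auxiliary set. We would first try to choose the degree split so that all three factors remain above Sawin's admissible threshold, and then compute the largest $\eta$ for which the resulting exponent inequality holds.
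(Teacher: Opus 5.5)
There is a genuine gap. The proposal never supplies the mechanism that makes a small summand possible, and it locates the difficulty in the wrong place.

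Your Steps~1--2 assume that every element keeps Pilatte's structure (discrete-log digits separated by $\cA$-valued digits) along its whole length. But if $s_1\le m^{1-\eta}$, then in the $\asymp k$ blocks above the length of $s_1$ at most two summands contribute. Apart from $O(1)$ blocks touched by leading digits or incoming carries, each separator digit of the sum therefore lies in $\cA\cup(\cA+\cA+\{0,1\})$. This set omits $0$, and $\cA+\cA+\{0,1\}$ omits all of $\cA$ by Lemma~\ref{lem:aux}. Hence any $m$ whose separator digits vanish throughout that range has no admissible representation, whatever Sawin's count gives.

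The paper's key idea is to truncate the structure of \emph{every} element. For $f\in\cF_k$, only the blocks $i\le\lfloor\alpha k\rfloor$ carry $e_i(f)$ and $r_i(f)\in\cA$; the blocks $\lfloor\alpha k\rfloor<i\le k$ carry uniformly random $E_i(f)$ and $R_i(f)\in\cW$. Take $f_0$ at scale $k_*=\lfloor\alpha k\rfloor$ (so $n_{f_0}=m^{\alpha^2+o(1)}$) and $f_1,f_2$ at scale $k$. The random digits of $f_1,f_2$ handle the blocks above $k_*$, and the random digits of $f_0$ handle the blocks in $(\ell_*,k_*]$, where $\ell_*=\lfloor\alpha k_*\rfloor$. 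Only the congruences $f_0f_1f_2\equiv\omega_i^{x_i}$ modulo $g_i$ for $i\le\ell_*$ survive, a modulus of degree about $\alpha^4k^2$. Sawin then gives $q^{(c(2+\alpha^2)-\alpha^4)k^2+O(k)}$ triples, each succeeding with probability at least $q^{-(1-\alpha^4)k^2-O(k)}$, whence the condition $c(2+\alpha^2)>1$. Note also that $\cS$ is one fixed set, so nothing can be ``built at scale $m$''; $\cA$ is fixed, the randomness lies in the digits, and there are no deletions.

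The price of truncation is paid in Step~1, which you treat as routine. Once the top digits are random, $s_1+s_2=s_3+s_4$ does not reduce blockwise to $\log a+\log b=\log c+\log d$, and the summands may sit at different scales. The paper argues as follows.
\begin{enumerate}
\item It uses the $\cA$-separators to show $k_4\ge\alpha k_2-9$.
\item It derives $f_1f_2\equiv f_3f_4$ modulo $G$ with $\deg G\ge\alpha^4k_2^2-O(k_2)$, and $f_1\equiv f_3$ modulo $H$ with $\deg H\ge\alpha^2k_1^2-k_2^2-O(k_1)$.
\item It excludes the nontrivial case by comparing degrees, which requires $(\alpha^2-c)(\alpha^4-c)>c$.
\end{enumerate}
Injectivity needs $\alpha^2>c$. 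The condition $\alpha^4>2c$ is what lets you extract pairwise disjoint triples, giving exact independence in place of Janson.

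The constant $0.0527$ is $1-\alpha^2$ at essentially the smallest $\alpha^2\approx 0.9473$ for which some $c$ satisfies both $c>1/(2+\alpha^2)$ and $(\alpha^2-c)(\alpha^4-c)>c$. It reflects a trade-off between the Sidon constraint and the counting constraint, not between Sawin's power saving and a truncation loss; Sawin's error term is negligible once $q$ is large. So your ``optimizing these exponents'' step would not produce the threshold, and as written the plan does not prove the theorem.
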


\subsection{Structure of this paper}
	
	The rest of our article is organized as follows. In Section \ref{Sec-2}, we fix some notation and state some preliminary results used later. In Section \ref{Sec-3}, we provide the construction of our desired Sidon set following the arguments of Pilatte \cite{Pi24}, but we need to develop a truncated version. In Section \ref{Sec-4}, we prove that the constructed set indeed satisfies the Sidon property. In addition, Lemma \ref{lem:sizeS} and \eqref{eq:parameters} provide a quantitative description of the set $\cS$ in Theorem \ref{thm:main}. In Section \ref{Sec-5}, we show that our constructed Sidon set is an asymptotic basis of order $3-\eta$ with probability one. In the last section, we conclude the paper and provide some open problems for further research.

    We expect the construction of Pilatte \cite{Pi24}, as well as the truncated version introduced in this paper, can find many other applications in the study of Sidon sets.

	\section{Notation and Preliminaries}\label{Sec-2}
	Let $\mathbb Z$ and $\mathbb{N}$ be the sets of integers and natural numbers, respectively. 
	For $k\ge 1$ and subsets $A_1,\ldots,A_k\subset \mathbb Z$, let 
	\[
	A_1+\cdots+A_k
	:=
	\{a_1+\cdots+a_k: a_i\in A_i \text{ for } 1\le i\le k\}
	\]
	be their sumset.
	Throughout the paper, for real-valued functions $f(x), g(x)$, we write $f(x) = O(g(x))$ or $f(x)\ll g(x)$ if $|f(x)| \leq C|g(x)|$ for some
	constant $C > 0$ and $f(x)=o(g(x))$ with $g(x)\neq 0$ if
$
\lim_{x\to\infty}f(x)/g(x)=0
$. We write $f(x)\asymp g(x)$ if both $f(x)\ll g(x)$ and
$g(x)\ll f(x)$ hold.
	For integers $n$ and $m\geq 1$, the symbol $[n~{\rm mod}~m]$ is defined to be the unique integer $a$ such that $a\in \{0,1,\ldots,m-1\}$ and $a\equiv n\bmod m$.
	
	Following Cilleruelo \cite{Ci14} and Pilatte \cite{Pi24}, we use the following generalized base. 
	\begin{definition}
		Let $\bb=(b_1,b_2,\cdots)$ be an infinite sequence of integers with $b_i\geq2$. For any $m\in \N$ and $a_1,\ldots,a_m\in \mathbb{Z}$, write
		\[
		\overline{a_m\cdots a_1}^{\,\bb}
		:=a_1+a_2b_1+a_3b_1b_2+\cdots+a_m b_1\cdots b_{m-1}.
		\]
		In particular, every positive integer $n$ has a unique representation of this
		form with $0\leq a_i<b_i~(1\le i\le m)$ and $a_m\neq 0$.
	\end{definition}

	Let $q\ge 2$ be a prime power and $\F_q$ the finite field with $q$ elements. Let $\F_q[t]$ be the polynomial ring over $\F_q$.
	Throughout this paper, let $\cM$ and $\cM_n$ denote the sets of monic polynomials and monic polynomials of degree $n$ in $\mathbb{F}_q[t]$, respectively.
	Denote by $\cP_n$ the set of irreducible polynomials in $\cM_n$. 
	
	The following lemma is a standard result, and
	see \cite[Theorem 2.2]{Ro02} for instance.
	
	\begin{lemma}\label{lem:prime}
		For any $n\geq1$,
		\[ |\cP_n|=\frac{q^n}{n}+O\Big(\frac{q^{n/2}}{n}\Big), \] where the implied constant is absolute.
	\end{lemma}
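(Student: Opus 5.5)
The statement is the prime polynomial theorem over $\F_q[t]$, and I will prove it through the standard identity
\[
\sum_{d\mid n} d\,|\cP_d| = q^n ,
\]
which I derive first. Every monic polynomial of degree $n$ factors uniquely into monic irreducibles. Comparing the generating functions
\[
\sum_{f\in\cM} u^{\deg f}=\frac{1}{1-qu}
\qquad\text{and}\qquad
\prod_{d\ge1}(1-u^d)^{-|\cP_d|},
\]
and taking the logarithmic derivative of both sides, gives the identity. Alternatively, it follows because $t^{q^n}-t$ is the product of all monic irreducibles whose degree divides $n$, since its roots are exactly the elements of $\F_{q^n}$.

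Next I apply Möbius inversion, which gives
\[
n|\cP_n|=\sum_{d\mid n}\mu(d)q^{n/d}.
\]
The term $d=1$ contributes the main term $q^n$. Every other divisor satisfies $d\ge2$, so each remaining term is at most $q^{n/2}$ in absolute value. The number of such terms is at most $n/2$, and in fact the terms with $d\ge3$ are each at most $q^{n/3}$.

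It remains to bound the error. A crude bound gives
\[
\Bigl|\sum_{d\mid n, d>1}\mu(d)q^{n/d}\Bigr|\le q^{n/2}+\sum_{k\le n/3}q^{k}\le q^{n/2}+2q^{n/3}\le 3q^{n/2},
\]
because the values $n/d$ with $d\ge3$ are distinct integers at most $n/3$ and $q\ge2$. Dividing by $n$ gives $|\cP_n|=q^n/n+O(q^{n/2}/n)$ with an absolute implied constant (at most $3$). There is no real obstacle here. The only point needing care is to sum the tail as a geometric series in $q$, rather than bound it by the number of divisors times $q^{n/2}$, so that no factor depending on $n$ appears in the implied constant.
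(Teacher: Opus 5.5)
Your proof is correct and is essentially the standard argument behind the result that the paper does not prove but cites (Rosen, Theorem 2.2). That argument is the identity $\sum_{d\mid n} d\,|\cP_d| = q^n$, followed by M\"obius inversion, with the terms $d>1$ bounded by $q^{n/2}+\frac{q}{q-1}q^{n/3}\le 3q^{n/2}$ via a geometric series, which gives an implied constant independent of both $q$ and $n$.
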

	
	Our next lemma is a consequence of Sawin's result \cite[Lemma 9.14]{Saw24}. 
	\begin{lemma}\label{lem:sawin}
		For $0<\theta<1$ and integers $1\leq d_0< d$, let $n=d_0+2d$ and $g\in\F_q[t]$ be squarefree of degree $2\leq \deg(g)\leq \theta n$, and suppose that no irreducible factor of $g$ has degree $d_0$ or $d$. 
		Then, for any $a\in\big(\F_q[t]/(g)\big)^\times$, the number $T(a;g;d_0,d)$ of triples
		\[
		(f_0,f_1,f_2)\in\cP_{d_0}\times\cP_d\times\cP_d
		\]
		satisfying
		\[
		f_1\ne f_2
		\quad \text{and} \quad
		f_0f_1f_2\equiv a\bmod g
		\]
		is
		\[
		T(a;g;d_0,d)=
		\frac{|\cP_{d_0}|\,|\cP_d|\big(|\cP_d|-1\big)}{\phi(g)}
		+O\!\left(C_\theta^{\,n}q^{\frac{n-\deg(g)}{2}}\right),
		\]
		where $\phi(g)=\big|\big(\F_q[t]/ (g)\big)^\times\big|$ and 
        \[
        C_\theta=\frac{3e(1+2\theta)2^{\frac{\theta}{1-\theta}}}{1-\theta}.
        \]
	\end{lemma}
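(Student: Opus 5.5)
The plan is to derive Lemma \ref{lem:sawin} from Sawin's bound \cite[Lemma 9.14]{Saw24} by the standard route: expand the congruence condition with Dirichlet characters modulo $g$, and then pass from von Mangoldt sums to counts of primes.

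\textbf{Step 1: character expansion.} By orthogonality, the number of triples $(f_0,f_1,f_2)\in\cP_{d_0}\times\cP_d\times\cP_d$ with $f_0f_1f_2\equiv a\bmod g$ (allowing $f_1=f_2$) equals
\[
\frac{1}{\phi(g)}\sum_{\chi\bmod g}\overline{\chi}(a)\Big(\sum_{f_0\in\cP_{d_0}}\chi(f_0)\Big)\Big(\sum_{f\in\cP_d}\chi(f)\Big)^2 .
\]
The principal character contributes
\[
|\cP_{d_0}|\,|\cP_d|^2/\phi(g).
\]
This needs a small check: no irreducible of degree $d_0$ or $d$ divides $g$, so every such prime is coprime to $g$ and $\chi_0(f)=1$ for all of them.

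\textbf{Step 2: the diagonal $f_1=f_2$.} These terms are counted by $f_0f_1^2\equiv a\bmod g$. Bounding them trivially by $|\cP_{d_0}||\cP_d|\ll q^{d_0+d}$ is not good enough when $\deg g$ is large. Instead I will subtract the expected diagonal main term $|\cP_{d_0}||\cP_d|/\phi(g)$, which converts $|\cP_d|^2$ into $|\cP_d|(|\cP_d|-1)$. The remaining discrepancy is handled by the same character-sum method, using sums of $\chi(f_0)\chi^2(f_1)$. This is a two-variable convolution of total degree $d_0+d<n$, so even a GRH-type (Weil) bound gives
\[
q^{(d_0+d)/2}\ll q^{(n-\deg g)/2}\quad\text{when } \deg g\le d .
\]
If $\deg g$ can exceed $d$, I would instead use the trivial bound $q^{d_0+d}$ together with the count of solutions $f_1$ of a quadratic congruence. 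Making this case fit inside the stated error is the delicate point.

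\textbf{Step 3: primes versus $\Lambda$.} Sawin's lemma controls, for non-principal $\chi$ and squarefree modulus of degree at most $\theta n$, the sum
\[
\sum_{\chi\ne\chi_0}\Big|\sum_{\deg F=n}(\Lambda_{d_0}*\Lambda_d*\Lambda_d)\chi(F)\Big| ,
\]
where $\Lambda_k$ is the von Mangoldt function restricted to degree $k$. Equivalently, it controls the deviation in each residue class, with bound $C_\theta^n q^{(n-\deg g)/2}$ after dividing by $\phi(g)$. The explicit $C_\theta$ is the constant appearing in that lemma, with $3e(1+2\theta)$ arising from the three-fold convolution.

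To convert to primes, note that $\Lambda(f)=\deg P$ for prime powers $f=P^k$. The prime-power contributions, those with $k\ge2$, have size $O(q^{k/2})$ in degree $k$, so they contribute an error of lower order. Dividing by the weights $d_0d^2$ then gives prime counts; this division only improves the error term.

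\textbf{Main obstacle.} The hard part is matching the precise form of Sawin's statement, namely which normalization and which family of convolutions it covers, to the triple count with $f_1\ne f_2$. In particular, the error must come out uniformly as $C_\theta^n q^{(n-\deg g)/2}$ rather than per character. I would first write Sawin's estimate in residue-class form and then carry out Steps 1–3 directly in that form.
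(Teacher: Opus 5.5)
Your Step 2 is a genuine gap, and it is exactly the case the paper needs. Any estimate made character by character (Weil/RH) gives an error of size roughly $q^{(\text{total degree})/2}$. That error does not shrink as $\deg g$ grows, whereas the target $q^{(n-\deg g)/2}$ does.

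For the diagonal $f_0f_1^2\equiv a\bmod g$ this gives about $q^{(d_0+d)/2}$, which is admissible only when $\deg g\le d$. Counting one variable trivially does not help either. There is at most one $f_0$ per $f_1$ once $\deg g>d_0$, and at most $2^{\omega(g)}$ values of $f_1$ per $f_0$ once $\deg g>d$. This gives at best about $\min\bigl(q^{d},2^{\omega(g)}q^{d_0}\bigr)$, admissible only when $\deg g\le 2d-d_0$.

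In Lemma \ref{lem:sizeC} the statement is used with $\deg g=\ell_*^2\approx\alpha^4k^2\approx0.897k^2$. There $d\approx ck^2\approx0.339k^2$ and $2d-d_0\approx c(2-\alpha^2)k^2\approx0.357k^2$, so your argument gives nothing in that range.

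Step 3 has the same defect. The terms containing a proper prime power are small compared with the main term $q^{n-\deg g}$. But for large $\deg g$, neither trivial counting nor Weil bounds them by $q^{(n-\deg g)/2}$ in an individual residue class.

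The root cause is reading Sawin's estimate as a bound for $\sum_{\chi\ne\chi_0}\lvert\cdot\rvert$. That is not equivalent to the residue-class statement: individual nonprincipal sums here are typically of size $q^{n/2}$. Sawin's residue-class bound comes from cancellation \emph{across} characters. So every piece you peel off (the diagonal, the prime powers) would itself need a Sawin-type input.

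The missing observation is that nothing has to be peeled off. The paper applies Sawin's Lemma 9.14 directly in its residue-class form (9.17), with $h=1$, $c=0$, $\omega=3$ and degree tuple $(d_0,d,d)$. By the definition of $H^1_{d_0,d,d}$, and because $d_0\ne d$, one has $H^1_{d_0,d,d}(f)=d_0d^2\cdot\#\{(f_0,f_1,f_2)\in\cP_{d_0}\times\cP_d\times\cP_d:\ f_1\ne f_2,\ f_0f_1f_2=f\}$. Thus:
\begin{itemize}
\item the condition $f_1\ne f_2$ is built into the function Sawin controls;
\item prime powers never occur;
\item (9.17) is directly an asymptotic for $d_0d^2\,T(a;g;d_0,d)$.
\end{itemize}

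Your coprimality check is what lets the main term be evaluated exactly as $\frac{d_0d^2}{\phi(g)}|\cP_{d_0}|\,|\cP_d|(|\cP_d|-1)$. The error is $\frac{n!}{d_0!(d!)^2}c_\theta^{\,n-\deg g}q^{(n-\deg g)/2}$. It is bounded by $(3c_\theta)^nq^{(n-\deg g)/2}$ using $\frac{n!}{d_0!(d!)^2}\le3^n$ (multinomial theorem) and $c_\theta\ge1$, and then one divides by $d_0d^2$. In particular, the factor $3$ in $C_\theta=3c_\theta$ comes from this multinomial coefficient, not from the constant in Sawin's lemma.
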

	
	\begin{proof}
    Since no irreducible factor of $g$ has degree $d_0$ or $d$ and all $f_i$ are irreducible, we have $(f_0f_1f_2,g)=1$. 
		Applying Sawin's lemma \cite[Lemma 9.14]{Saw24} with $h=1$, $c=0$, $\omega=3$, and degree tuple $(d_0,d,d)$, we get from  \cite[(9.17)]{Saw24} that\footnote{The definition of $H^h_{n_1,\ldots,n_\omega}(f)$ in \eqref{eq-sawin-1} is given at the bottom of \cite[p.394]{Saw24}.}
		\begin{align}\label{eq-sawin-1}
			\Bigg|d_0d^2T(a;g;d_0,d)-\frac{1}{\phi(g)}
			\sum_{\substack{f\in\cM_{n}}}
			H^1_{d_0,d,d}(f)\Bigg|\ll\frac{n!}{d_0!(d!)^2}c_\theta^{n-\deg(g)}q^{\frac{n-\deg(g)}{2}}, 
		\end{align}
		where the constant 
        \[
c_{\theta}=\frac{e(1+2\theta)2^{\frac{\theta}{1-\theta}}}{1-\theta}.
        \] 
        By the definition of $H^1_{d_0,d,d}(f)$, since $d_0\neq d$, we have
		\begin{align}\label{eq-sawin-2}
			\frac{1}{\phi(g)}
			\sum_{\substack{f\in\cM_{n}}}
			H^1_{d_0,d,d}(f)=\frac{1}{\phi(g)}
			\sum_{\substack{f\in\cM_{n}}}
			\sum_{\substack{
					(f_0,f_1,f_2)\in(\cM)^3\\
					f_0\in \cP_{d_0},\ f_1,f_2\in \cP_{d}\\
					f_1\neq f_2,\
					f_0f_1f_2=f}}
			d_0d^2 .
		\end{align}
		Therefore, the right-hand side of (\ref{eq-sawin-2}) equals 
		\begin{align}\label{eq-sawin-3}
			\frac{d_0d^2}{\phi(g)}
			\sum_{\substack{f\in\cM_{n}}}
			\sum_{\substack{
					(f_0,f_1,f_2)\in(\cM)^3\\
					f_0\in \cP_{d_0},\ f_1,f_2\in \cP_{d}\\
					f_1\neq f_2,\
					f_0f_1f_2=f}}
			1 =\frac{d_0d^2}{\phi(g)}|\cP_{d_0}|\,|\cP_d|\big(|\cP_d|-1\big).
		\end{align}
		By the multinomial theorem,
		$$
		3^n=(1+1+1)^n
		=
		\sum_{\substack{a+b+c=n\\a,b,c\ge0}}
		\frac{n!}{a!b!c!},
		$$
		which implies that
		$$
		\frac{n!}{d_0!(d!)^2}\le 3^n.
		$$
		Therefore, inserting \eqref{eq-sawin-2} and \eqref{eq-sawin-3} into \eqref{eq-sawin-1}, dividing by $d_0d^2$ and taking $C_\theta=3c_\theta$, we obtain the desired formula.
	\end{proof}
	
	The following auxiliary set, introduced by Pilatte \cite[Lemma 3.1]{Pi24}, is a key ingredient in his construction.
	
	\begin{lemma}[{\cite[Lemma~3.1]{Pi24}}]\label{lem:aux}
		For every sufficiently large prime $p$, there exists a set
		\[
		\cA\subset\big\{1,2,\ldots,\lfloor p/2\rfloor-1\big\}
		\]
		such that $\cA+\cA+\cA$ contains $p+2$ consecutive integers and
		\[
		\cA\cap\big(\cA+\cA+\{0,1\}\big)=\emptyset.
		\]
	\end{lemma}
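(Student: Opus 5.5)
We need to prove Lemma (Pilatte's auxiliary set): for large prime $p$, there is $\cA\subset\{1,\ldots,\lfloor p/2\rfloor-1\}$ such that $\cA+\cA+\cA$ contains $p+2$ consecutive integers and $\cA\cap(\cA+\cA+\{0,1\})=\emptyset$.

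The plan is to use the alteration method on a random subset of a middle interval, scaled so that triple sums land in a window of length about $p$ while pair sums (plus $0$ or $1$) stay far from the elements themselves. Concretely, let $N=\lfloor p/2\rfloor-1$ and take an interval $I=[\alpha N,\beta N]$ with constants chosen so that $3I$ covers an interval of length at least $p+2$, i.e.\ $3(\beta-\alpha)N\ge p+2+o(p)$. Since $3(\beta-\alpha)N\approx \tfrac32(\beta-\alpha)p$, we need $\beta-\alpha>2/3$. The obstruction $\cA\cap(\cA+\cA)$ cannot be avoided purely by size, since $2I$ overlaps $I$ when $\beta>2\alpha$. So we split $\cA=\cA_1\cup\cA_2$: a \emph{large} part $\cA_2$ near the top (say in $[(1-\delta)N,N]$) and a \emph{small} part $\cA_1$ in a low range. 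We arrange the ranges so that $\cA_1+\cA_1+\{0,1\}$ lies strictly between the ranges of $\cA_1$ and $\cA_2$, while $\cA_1+\cA_2$ and $\cA_2+\cA_2$ exceed $N$. The only remaining possible collision is within $\cA_1$, which is handled by the choice of range: $\cA_1\subset(N/3, N/2)$ would give $\cA_1+\cA_1>2N/3$, and we must keep this below the start of $\cA_2$.

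The covering requirement is then achieved using sums of the form $a+a'+a''$ with two small elements and one large element, or other mixed combinations. To get an interval of length $p+2\approx 2N$ from these, I would instead take $\cA$ random with density $p_0=K(\log N)^{1/3}N^{-2/3}$ on suitable ranges, rather than all integers. A random set then has few collisions $a=b+c+\epsilon$: the expected number is about $p_0^3N^2=K^3\log N$, which is tiny compared with $|\cA|\approx N^{1/3}$. Meanwhile each target $m$ has about $p_0^3N^2\gg\log N$ expected representations as a triple sum, and these representations are nearly independent. By Janson's inequality, every $m$ in the target window is represented with failure probability $N^{-2}$, so a union bound over the $O(N)$ targets succeeds.

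Alteration step: delete one element from each collision triple $(a,b,c,\epsilon)$. With high probability the number of such triples is $O(K^3\log N)$, and moreover, by a Kim--Vu or deletion-type argument, no single element lies in many triples. Deleting $O(\log N)$ elements destroys only a few representations of each $m$, because the maximum degree of any element in the representation hypergraph of a fixed $m$ is $O(1)$ with high probability. We therefore need the count of disjoint representations of each $m$ to exceed the number of deletions. This is secured by the Erd\H{o}s--Tetali disjointness lemma: there are $\gg\log N$ pairwise disjoint representations with high probability, while each deleted element kills at most one representation of a given $m$ within a disjoint family.

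The main obstacle is the \textbf{geometric choice of ranges}. We need $\cA+\cA+\cA$ to reach length $p+2>2N$ inside $[3,3N]$, which is easy since $3N\approx 1.5p$, while keeping the deletions under control. I expect the delicate part to be making the disjoint-representation count beat the collision count uniformly in $m$, including near the endpoints of the window. There the representation counts shrink, so I would enlarge the window slightly and apply the union bound only on its interior.
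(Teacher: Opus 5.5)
Your overall plan (a random set of density $\rho\asymp(\log N)^{1/3}N^{-2/3}$, Janson's inequality to cover a window, then an alteration step) is the right kind of argument. The paper does not reprove this lemma; it imports it from Pilatte and describes his proof as an alteration argument.

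The gap is in how you account for the deletions. You reduce everything to showing that each $m$ in the window has more pairwise disjoint representations than there are deletions in total. Both quantities are of order $\rho^3N^2$, so this is a comparison of constants, and the constants go the wrong way. Enlarging $K$ does not help, since both sides scale like $K^3$.

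Take the simplest design, a $\rho$-random subset of $[uN,vN]$. The window forces $3(v-u)N\ge p+2>2N$, so $v-u>2/3$ and hence $u<1/3$.
\begin{itemize}
\item The collisions $b+c+\epsilon\in\cA$ (with $\{b,c\}$ unordered and $\epsilon\in\{0,1\}$) number about $\rho^3N^2(v-2u)^2/2$. With high probability they are pairwise disjoint, so that many deletions really occur.
\item Even the central $m$ has only about $\rho^3N^2(v-u)^2/8$ representations.
\item The inequality $(v-u)^2/8>(v-2u)^2/2$ is equivalent to $3u>v$, which is impossible when $v-u>2/3$ and $v\le 1$.
\end{itemize}
Near the ends of the window the representation counts are smaller still.

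Your first paragraph is also a dead end. Since $\cA+\cA+\cA\subset[3\min\cA,3N]$ must contain $p+2=2N+5$ consecutive integers, you need $\min\cA\le (N-4)/3$. So $\cA_1\subset(N/3,N/2)$ cannot work, and collisions cannot be avoided by placement alone.

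What is missing is a \emph{local} bound. You need that, with high probability and simultaneously for all $m$, only $O(1)$ representations of $m$ contain an element lying in some collision triple.
\begin{itemize}
\item For fixed $m$, consider configurations made of a representation of $m$ together with a collision triple sharing an element with it. Their expected number is $O(\rho^5N^3)=O(\rho^3N^2\cdot\rho^2N)=N^{-1/3+o(1)}$.
\item Hence the probability of four pairwise disjoint such configurations is $N^{-4/3+o(1)}$, which survives the union bound over the $O(N)$ values of $m$.
\item Combine this with the maximum-degree bounds you already mention: each element lies in $O(1)$ collision triples and in $O(1)$ representations of any given $m$. This handles overlapping configurations.
\end{itemize}
Together these show that each $m$ loses only $O(1)$ representations, which is negligible against its $\gg\log N$ disjoint ones.

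Your degree bound by itself only gives $O(1)\cdot\#\{\text{deletions}\}=O(\log N)$ lost representations. That is the same order as the number of representations available, so it cannot close the argument.
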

	\begin{remark}
		In fact, Pilatte proved a stronger result:
		$$
		[p/5,7p/5]\cap \mathbb Z \subset \cA+\cA+\cA.
		$$
	\end{remark}

	\section{Construction}\label{Sec-3}
	\subsection{Parameters}
	Choose positive real numbers $c$ and $\alpha$ subject to the following constraints:
	\begin{equation}\label{eq:parameters}
		\begin{aligned}
			&0<c<\alpha^4<\alpha^2<1,\\
			&(\alpha^2-c)(\alpha^4-c)>c,\\
			&\alpha^4>2c,\\ 
			&c(2+\alpha^2)-1>0.
		\end{aligned}
	\end{equation}
	The following ranges are admissible:
	\[
	0.9472873042\ldots<\alpha^2<1
	\]
	and
	\[
	\frac{1}{2+\alpha^2}<c<
	\frac{
		1+\alpha^2+\alpha^4
		-\sqrt{(1+\alpha^2+\alpha^4)^2-4\alpha^6}
	}{2}.
	\]
    
	We take
	\begin{align}\label{choice}
	\alpha^2=0.9473
	\quad \text{and} \quad
	c=0.3393.
	\end{align}
	With these choices, we have
	\[
	\alpha^4=0.89737729>0.6786=2c,
	\quad c(2+\alpha^2)=1.00001889>1.
	\]
	Moreover,
	\[
	(\alpha^2-c)(\alpha^4-c)
	=0.33931099232>0.3393=c.
	\]
	Put
	\begin{equation}\label{eq:beta-gamma}
		\beta:=c(2+\alpha^2)-\alpha^4
		\quad \text{and} \quad
		\gamma:=c(2+\alpha^2)-1.
	\end{equation}
	Then
	\[
	\beta=0.1026416>0,
	\qquad
	\gamma=0.00001889>0.
	\]
	Keep in mind the above inequalities, which would be useful in our subsequent arguments.
	
	\subsection{A truncated construction}
	For the rest of the proof, fix a (sufficiently large) prime $p$ and a set $\cA$ as in Lemma \ref{lem:aux}, and put
	\[
	\cW=\{0,1,\ldots,p-1\}.
	\]
	Let $q$ be a sufficiently large prime power with $q>p$.
	For each $i\ge1$, choose a polynomial $g_i\in\cP_{2i-1}$, and let $\omega_i$ be one of the generators of the cyclic group $\big(\F_q[t]/(g_i)\big)^\times$. Let $K$ be a fixed sufficiently large integer. For all integers $k\ge K$, define
	\[
	\cF_k:=\bigcup_{ck^2\le 2j<c(k+1)^2}\cP_{2j},
	\]
	and put
	\[
	\cF:=\bigcup_{k\ge K}\cF_k.
	\]
	
	For any $k\ge K$, let $k_\alpha=\lfloor\alpha k\rfloor$, then $k_\alpha<k$ since $\alpha<1$. Let $\bb=(b_1,b_2,\ldots,b_i,\ldots)$ be a base with
	\[
	b_i=
	\begin{cases}
		q^{i}-1, & i \ \text{odd},\\
		p, & i \ \text{even}.
	\end{cases}
	\]  
	For any $f\in\cF$, there exists a unique integer $k\geq K$ such that $f\in \cF_k$. We define  
	\[
	n_f:=\overline{s(f)\ R_k(f)\ E_k(f)\cdots\ R_{k_\alpha+1}(f)\ E_{k_\alpha+1}(f) \ r_{k_\alpha}(f)\ e_{k_\alpha}(f)\ \cdots\ r_1(f)\ e_1(f)}^{\,\bb},
	\]
	where
	\begin{enumerate}[label=(\roman*)] 
		\item For $1\leq i\le k_\alpha$, let $e_i(f)$ be the unique integer satisfying 
		\[ 0\le e_i(f)<q^{2i-1}-1 \quad \text{and} \quad \omega_i^{e_i(f)}\equiv f\bmod {g_i}. 
		\] 
		Since the degree of $g_i$ is odd and the degree of $f$ is even, we obtain $(f,g_i)=1$, which implies that such an $e_i(f)$ exists.
		For $k_\alpha< i\le k$, let $E_i(f)$ be a random variable uniformly distributed on 
		\[ 
		\big\{0,1,\ldots,q^{2i-1}-2\big\}. 
		\] 
        
		\item For $1\le i\le k_\alpha$, let $r_i(f)$ be a random variable uniformly distributed on  $\cA$. For $k_\alpha<i\le k$, let $R_i(f)$ be a random variable uniformly distributed on $\cW$. Recall that $\cA$ and $\cW$ are the sets defined at the beginning of this subsection.
        
		\item Let $s(f)$ be a random variable uniformly distributed on 
		$$
		\big\{1,2,\ldots,q^{3k}\big\}.
		$$ 
	\end{enumerate}
	Moreover, all random variables defined above are required to be mutually independent as $f$ and $i$ vary.

	Let $\cS$ be the random set defined by
	\begin{align}\label{def-S}
		\cS:=\{n_f:f\in\cF\}.
	\end{align}  
	In Section \ref{Sec-4}, we show that $\cS$ is an infinite Sidon set and estimate $\cS(x)$.
	In Section \ref{Sec-5}, we prove that $\cS$ is  an asymptotic
	basis of order $3-\eta$ with probability 1. From now on,  $\cS$ will always denote the set defined by \eqref{def-S}.

The main difference between Pilatte's construction and ours comes from
the restriction on the size of one of the summands. In Pilatte's
argument, the corresponding polynomials belong to the same set $\cP_d$. To make one summand $s_i$ smaller, we choose the corresponding polynomial from $\cP_{d_0}$  with some $d_0<d$. This leads to the
$\bb-$expansion of $s_i$ terminating at an earlier position. To preserve the
asymptotic basis property, the digits in the truncated part must be
chosen from a larger set: we replace the
$\cA$-valued digits in Pilatte's construction by $\cW$-valued digits.
It allows us to rewrite $m$ so that its digits belong
to the corresponding sumsets, see Lemma \ref{lem:expansion}.

	\section{The Sidon property}\label{Sec-4}
	We first give a few properties of the set $\cS$ constructed in the last section.
	\begin{lemma}\label{lem:size}
		If $f\in\cF_k$, then
		\[
		n_f=q^{k^2+O(k)}.
		\]
	\end{lemma}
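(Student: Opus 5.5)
The plan is to read off the size of $n_f$ from its $\bb$-expansion. The expansion has $2k$ low-order digits, with the leading digit $s(f)$ sitting in position $2k+1$. Accordingly, $n_f$ lies between $B_{2k}$ and $(q^{3k}+1)B_{2k}$, where
\[
B_{2k}:=b_1b_2\cdots b_{2k}.
\]
The lower bound holds because $s(f)\ge 1$, and the remaining digits are nonnegative. For the upper bound, note that every lower digit lies in its admissible range:
\begin{itemize}
\item $e_i(f),E_i(f)\le q^{2i-1}-2<b_{2i-1}$;
\item $r_i(f)\in\cA\subset\{0,\dots,p-1\}$ and $R_i(f)\in\cW$, so both are less than $b_{2i}=p$.
\end{itemize}
Hence the lower part $\overline{R_k(f)\cdots e_1(f)}^{\,\bb}$ is at most $B_{2k}-1$, and therefore
\[
n_f\le q^{3k}B_{2k}+B_{2k}.
\]

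Next we estimate $B_{2k}$ itself:
\[
B_{2k}=\prod_{i=1}^{k}(q^{2i-1}-1)\,p^{k}.
\]
Since $\sum_{i=1}^k(2i-1)=k^2$ and $1-q^{-(2i-1)}\ge 1/2$, the product $\prod_{i\le k}(q^{2i-1}-1)$ lies between $2^{-k}q^{k^2}$ and $q^{k^2}$. The prime $p$ is fixed with $p<q$, so $1\le p^k\le q^k$. Combining these, $B_{2k}=q^{k^2+O(k)}$, where the implied constant in $O(k)$ is absolute (using $q\ge 2$, so $2^{-k}\ge q^{-k}$).

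Finally, the factor $q^{3k}+1\le q^{3k+1}$ only contributes another $O(k)$ in the exponent. This gives $n_f=q^{k^2+O(k)}$ uniformly in the random choices.

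The only point needing care is checking that each digit lies in $[0,b_i)$, so that the lower part really is at most $B_{2k}-1$. For the positions $i\le k_\alpha$ this uses the fact that $e_i(f)$ is a discrete logarithm in $[0,q^{2i-1}-2]$, and that $\cA\subset[1,p/2)$. Neither the truncation at $k_\alpha$ nor the set $\cF_k$ affects the estimate, since it depends only on $k$.
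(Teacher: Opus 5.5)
Your proposal is correct and follows essentially the same argument as the paper. Both bound $n_f$ between $b_1\cdots b_{2k}$ and $(q^{3k}+1)\,b_1\cdots b_{2k}$ using that every digit lies in its admissible range, and then estimate $\prod_{j\le k}p\,(q^{2j-1}-1)=q^{k^2+O(k)}$ using $p<q$. The only cosmetic difference is that the paper gets the slightly cleaner lower bound $b_1\cdots b_{2k}>q^{k^2}$ from $p(q^{2j-1}-1)>q^{2j-1}$.
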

	
	\begin{proof}
		Clearly, we have
		\begin{align}\label{ineq:1}
			n_f\geq \overline{1 \underbrace{00\cdots 00}_{2k\ \text{zeros}}}^{\,\bb}= b_1b_2\cdots b_{2k}=\prod_{j=1}^{k}p\left(q^{2j-1}-1\right)>\prod_{j=1}^{k}q^{2j-1}=q^{k^2}.
		\end{align}
		On the other hand,
		\begin{align*}
			n_f &\le \overline{q^{3k}(p-1)(q^{2k-1}-2)\cdots(p-1)(q^{2k_\alpha+1}-2)\lfloor p/2\rfloor(q^{2k_\alpha-1}-2)\cdots\lfloor p/2\rfloor (q-2)}^{\,\bb}\\
			&\le \overline{q^{3k} \underbrace{00\cdots 00}_{2k\ \text{zeros}}}^{\bb}+\overline{1 \underbrace{00\cdots 00}_{2k\ \text{zeros}}}^{\bb}\\
			&\le \big(q^{3k}+1\big)p^k\prod_{j=1}^{k}q^{2j-1}.
		\end{align*}
		Recall that $p<q$, thus we obtain
		\begin{align}\label{ineq:2}
			n_f
			\leq q^{k^2+4k+1}.
		\end{align}
		The lemma follows from \eqref{ineq:1} and \eqref{ineq:2}.
	\end{proof}
	
	\begin{lemma}\label{lem:injective}
		For any $f,g\in \cF$, if $n_f=n_g$, then $f=g$.
	\end{lemma}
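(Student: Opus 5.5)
Suppose $f\in\cF_k$ and $g\in\cF_{k'}$ satisfy $n_f=n_g$. The plan is to show first that $k=k'$ and $\deg f=\deg g$, and then to recover $f$ from the digits $e_1(f),\ldots,e_{k_\alpha}(f)$ using the Chinese remainder theorem. The key point is that each $n_f$ is written with digits in the admissible range. For $i\le k_\alpha$ the odd-position digit $e_i(f)$ satisfies $0\le e_i(f)<q^{2i-1}-1=b_{2i-1}$. For $k_\alpha<i\le k$ the digit $E_i(f)$ also lies in $\{0,\ldots,q^{2i-1}-2\}$. The even-position digits lie in $\cA\subset\{0,\ldots,p-1\}$ or in $\cW=\{0,\ldots,p-1\}$, so they are below $b_{2i}=p$. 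Hence the $\bb$-expansion of $n_f$ in the definition is the unique standard expansion of $n_f$ at positions $1,\ldots,2k$, and the top digit $s(f)\ge1$ sits at position $2k+1$. Here $s(f)$ may exceed $b_{2k+1}$, so it need not be a single digit.

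\textbf{Step 1: determine $k$.} I would use the size bounds from the proof of Lemma~\ref{lem:size}: $q^{k^2}<n_f\le q^{k^2+4k+1}$. If $k<k'$, then $(k+1)^2-k^2=2k+1<4k+1$, so these intervals can overlap and the bound alone does not separate them. In that case I would sharpen the estimate by comparing $n_f\le (q^{3k}+1)b_1\cdots b_{2k}$ with $n_g\ge b_1\cdots b_{2k'}\ge b_1\cdots b_{2k}\,b_{2k+1}b_{2k+2}=b_1\cdots b_{2k}\,p(q^{2k+1}-1)$. Since $q^{3k}+1$ is larger than $p(q^{2k+1}-1)$, this still fails to separate them. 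I therefore expect this to be the main obstacle.

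The cleaner route avoids sizes altogether. For injectivity it suffices to read off the low digits, which agree for both numbers. From $n_f=n_g$, uniqueness of the standard digits at positions below $2\min(k,k')$ gives $e_i(f)=e_i(g)$ for all $i\le \min(k_\alpha,k'_\alpha)$. It then remains to show that these digits determine $f$, and Step~1 is not needed at all.

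\textbf{Step 2: recover $f$.} From $e_i(f)=e_i(g)$ we get $f\equiv\omega_i^{e_i(f)}\equiv g\pmod{g_i}$ for $i\le K_\alpha:=\min(k_\alpha,k'_\alpha)$. The $g_i$ are irreducible of distinct degrees $2i-1$, so they are pairwise coprime, and the Chinese remainder theorem gives
\[
f\equiv g \pmod{G},\qquad \deg G=\sum_{i\le K_\alpha}(2i-1)=K_\alpha^2 .
\]
Now $\deg f<c(k+1)^2$ and $\deg g<c(k'+1)^2$. Without loss of generality $k\le k'$. We have $K_\alpha^2=\lfloor\alpha k\rfloor^2\approx\alpha^2k^2$, which exceeds $\deg f\approx ck^2$ because $c<\alpha^2$. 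But $\deg g\approx ck'^2$ could exceed $\alpha^2k^2$ if $k'\gg k$, so the comparison with $g$ needs more care.

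\textbf{Handling $k'\gg k$.} I would show that $k'\gg k$ is impossible. The digit of $n_f$ at position $2k+1$ comes from $s(f)$, and all of $n_f$'s standard digits above position $2k+1$ come from $s(f)\le q^{3k}$. That number occupies roughly three more base positions, since $b_{2k+1}b_{2k+2}b_{2k+3}\ge q^{4k}>q^{3k}$. So $n_f$ has standard length at most about $2k+4$, whereas $n_g$ has a nonzero top digit at position $\ge 2k'+1$. Comparing standard expansions gives $2k'\le 2k+O(1)$, so $k'\le k+2$. Then $\deg f,\deg g<c(k+3)^2<\lfloor\alpha k\rfloor^2$ for $k\ge K$ large, using $c<\alpha^2$. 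Since $f\equiv g\pmod G$ and both degrees are below $\deg G$, we get $f=g$.
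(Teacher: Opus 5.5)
Your proof is correct and follows essentially the same route as the paper. Both arguments show that the levels $k,k'$ differ by $O(1)$, match the digits $e_i$ for $i\le\min(\lfloor\alpha k\rfloor,\lfloor\alpha k'\rfloor)$, apply the Chinese remainder theorem, and compare degrees using $c<\alpha^2$. The ``obstacle'' in your Step 1 is only that sizes cannot force $k=k'$, which is never needed: $q^{k^2}<n_f\le q^{k^2+4k+1}<q^{(k+2)^2}$ already gives $|k-k'|\le 1$, and your expansion-length argument in the last paragraph re-derives this (it in fact yields $k'\le k+1$).
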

	
	\begin{proof}
		Suppose that $f\in\cF_k$ and $g\in\cF_\ell$. Then, by Lemma \ref{lem:size}, we have  
		$$
		n_f=q^{k^2+O(k)} \quad \text{and} \quad n_g=q^{\ell^2+O(\ell)},
		$$
		which implies that $k=\ell+O(1)$. Let 
		$$
		L:=\min\big(\lfloor\alpha k\rfloor,\lfloor\alpha \ell\rfloor\big).
		$$
		Since $n_f=n_g$, clearly 
		$$
		\left[
		n_f \ {\rm mod}\  \prod_{i=1}^{2L} b_i
		\right]
		=
		\left[
		n_g \ {\rm mod}\ \prod_{i=1}^{2L} b_i
		\right].
		$$ 
		Note that, for $1\leq i\leq L$,
		\[
		0\leq e_i(f),e_i(g)<q^{2i-1}-1, \ \ 0\leq r_i(f),r_i(g)<p.
		\]
		Hence, we have
		\[
		\overline{r_L(f)\ e_L(f)\ \cdots\ r_1(f)\ e_1(f)}^{\,\bb}
		=
		\overline{r_L(g)\ e_L(g)\ \cdots\ r_1(g)\ e_1(g)}^{\,\bb},
		\]
		Moreover, the corresponding digits are equal, which implies 
		\[
		e_i(f)=e_i(g) \quad (1\le i\le L).
		\]
        And by the definition of $e_i$, we obtain
        \[
        f\equiv g \bmod{g_i} \quad (1\le i\le L).
        \]
		Therefore, by the Chinese Remainder Theorem, we have
		\begin{align}\label{eq:congL}
		f\equiv g\Bigl(\operatorname{mod}\ \prod_{i=1}^{L} g_i\Bigr).
		\end{align}
		Since $k=\ell+O(1)$, the modulus $\prod_{i=1}^{L} g_i$ above has degree
		\[
		\sum_{i\le L}(2i-1)=L^2=\alpha^2 k^2+O(k),
		\]
		and
		\[
		\deg(f-g)\leq \max\{\deg f,\deg g\}\le ck^2+O(k).
		\]
		Since $K$ is sufficiently large and $\alpha^2>c$ from (\ref{eq:parameters}), the congruence \eqref{eq:congL} forces $f=g$.
	\end{proof}
	
	The following lemma shows that $\cS$ is indeed an infinite Sidon set.
	\begin{lemma}\label{prop:sidon}
		The set $\cS$ defined in \eqref{def-S} is a Sidon set.
	\end{lemma}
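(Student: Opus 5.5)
Suppose $n_{f_1}+n_{f_2}=n_{f_3}+n_{f_4}$ with $f_1,f_2,f_3,f_4\in\cF$. By Lemma \ref{lem:injective} it suffices to show $\{f_1,f_2\}=\{f_3,f_4\}$. The plan follows Cilleruelo's argument as adapted by Pilatte: read off the low-order digits on both sides, then turn them into a multiplicative identity modulo a product of the $g_i$ of large degree.

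First, normalize the sizes. Let $f_j\in\cF_{k_j}$ and order so that $k_1\ge k_2$, $k_3\ge k_4$. Lemma \ref{lem:size} gives $n_{f_j}=q^{k_j^2+O(k_j)}$, and comparing the dominant terms forces $k_1=k_3+O(1)$. Set
\[
L:=\min_j \lfloor \alpha k_j\rfloor .
\]
For $i\le L$ all four numbers have deterministic $e_i$-digits in $[0,q^{2i-1}-2]$ and $r_i$-digits in $\cA\subset[1,\lfloor p/2\rfloor-1]$. The $r$-digits act as separators. In each position, the sum of two $r$-digits is at most $p-2$, so adding a carry of $0$ or $1$ still does not wrap modulo $p$. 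The sum of two $e$-digits is at most $2q^{2i-1}-4$, which can produce a carry of at most $1$ into the next $r$-position.

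Next, compare the digits position by position from $i=1$ upward. By induction, the carry into each $r$-position is the same on both sides. At an $r$-position this gives
\[
r_i(f_1)+r_i(f_2)+\epsilon = r_i(f_3)+r_i(f_4)+\epsilon',
\]
where $\epsilon,\epsilon'\in\{0,1\}$ are the carries from the $e$-positions. The condition $\cA\cap(\cA+\cA+\{0,1\})=\emptyset$ is the tool for forcing $\epsilon=\epsilon'$. Here I would follow Pilatte's argument, which may need the sums $r+r$ to be analyzed rather than single elements. The conclusion is that $e_i(f_1)+e_i(f_2)=e_i(f_3)+e_i(f_4)$ exactly for all $i\le L$. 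Reducing modulo $q^{2i-1}-1=|(\F_q[t]/(g_i))^\times|$ then gives
\[
f_1f_2\equiv f_3f_4 \pmod{g_i}\quad (1\le i\le L),
\]
and the Chinese Remainder Theorem yields $f_1f_2\equiv f_3f_4 \pmod{\prod_{i\le L}g_i}$.

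Then we need a degree comparison. The modulus has degree $L^2=\alpha^2\min_j k_j^2+O(k)$. Each side has degree at most $c(k_1^2+k_2^2)+O(k)$, which is at most $2ck_1^2$. This is the main obstacle: $k_2$, and hence $L$, can be much smaller than $k_1$, so the modulus need not dominate. I would handle this by splitting into cases. If all $k_j$ are comparable, then $\alpha^2>2c$ (implied by $\alpha^4>2c$) gives equality of polynomials. Otherwise, we need more equations. If $k_2\ll k_1$, then also $k_4\ll k_3$ (otherwise the sizes mismatch), and $k_1=k_3+O(1)$. The idea is to compare higher digits up to $\lfloor\alpha k_1\rfloor$ by subtracting, using that $n_{f_2},n_{f_4}$ are tiny compared with $b_1\cdots b_{2L'}$ for suitable $L'$ with $L'^2\gtrsim c k_2^2$. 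This gives $f_1\equiv f_3$ modulo a product of degree $\alpha^2k_1^2>ck_1^2$, so $f_1=f_3$, and then $n_{f_2}=n_{f_4}$. The constraint $(\alpha^2-c)(\alpha^4-c)>c$ presumably governs the intermediate regime, where one needs a hybrid: congruence modulo the short product combined with a digit comparison on a window between $\alpha k_2$ and $\alpha k_1$. I would calibrate that window using this inequality.

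Finally, once $f_1f_2=f_3f_4$ holds as an identity of monic polynomials whose factors are irreducible, unique factorization in $\F_q[t]$ gives $\{f_1,f_2\}=\{f_3,f_4\}$, and $\cS$ is a Sidon set.
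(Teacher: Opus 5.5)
Your proposal has a genuine gap. The proof never establishes that the two smaller summands have comparable size, and it invokes the separator property in the wrong place. The condition $\cA\cap(\cA+\cA+\{0,1\})=\emptyset$ compares a \emph{single} element of $\cA$ with $\cA+\cA+\{0,1\}$. It says nothing about an equation $r_i(f_1)+r_i(f_2)+\epsilon=r_i(f_3)+r_i(f_4)+\epsilon'$ in which both sides lie in $\cA+\cA+\{0,1\}$. So it cannot force $\epsilon=\epsilon'$, and your exact identity $e_i(f_1)+e_i(f_2)=e_i(f_3)+e_i(f_4)$ is unjustified. This is harmless on its own: no digit sum overflows, so uniqueness of $\bb$-digits already gives the congruence modulo $q^{2i-1}-1$, which is all you use.

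What the separator property is really needed for is the following. Order so that $k_2\ge k_4$. If $k_4<\alpha k_2-9$, pick $\ell$ with $k_4+4<\ell\le\alpha k_2-4$. We have $n_{f_4}<b_1\cdots b_{2k_4+4}$, and any carry it produces is absorbed by an $r$-digit of $n_{f_3}$, since these are $<p/2$. Hence the $\ell$-th separator digit of the common sum equals $r_\ell(f_3)\in\cA$. Computed from $n_{f_1}+n_{f_2}$, the same digit lies in $\cA+\cA+\{0,1\}$, a contradiction. You replace this with the claim that $k_2\ll k_1$ forces $k_4\ll k_3$ because ``otherwise the sizes mismatch''. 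That is false: the sizes only give $k_1=k_3+O(1)$ and impose no relation between $k_2$ and $k_4$.

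Once $k_4\ge\alpha k_2-O(1)$ is in hand, there is no intermediate regime and no hybrid window to calibrate.
\begin{itemize}
\item The product congruence holds modulo $G=\prod_{i\le\alpha k_4}g_i$, with $\deg G\ge\alpha^4k_2^2-O(k_1)$. Since $\deg(f_1f_2-f_3f_4)\le c(k_1^2+k_2^2)+O(k_1)$, the case $f_1f_2\ne f_3f_4$ forces $k_2^2/k_1^2\le c/(\alpha^4-c)+o(1)$.
\item The single congruence $f_1\equiv f_3$ holds modulo $H=\prod_{k_2+4<i\le\min(\lfloor\alpha k_1\rfloor,\lfloor\alpha k_3\rfloor)}g_i$. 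The window must start above $k_2$, not near $\sqrt c\,k_2$, because $n_{f_2}\approx q^{k_2^2}$ occupies the digits up to position about $k_2$. You conflated $\log_q n_{f_2}$ with $\deg f_2\approx ck_2^2$.
\item Consequently $\deg H\ge\alpha^2k_1^2-k_2^2-O(k_1)$, not $\alpha^2k_1^2$. The case $f_1\ne f_3$ then forces $k_2^2/k_1^2\ge\alpha^2-c-o(1)$.
\end{itemize}
The hypothesis $(\alpha^2-c)(\alpha^4-c)>c$ says exactly that these two ranges are disjoint, so $f_1=f_3$ or $f_1f_2=f_3f_4$ always holds.

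Without comparability of $k_2$ and $k_4$, you cannot bound $\deg G$ or the starting point of $H$ in terms of $k_1,k_2$. Your ``comparable'' case via $\alpha^2>2c$ only covers $\alpha^2k_4^2>c(k_1^2+k_2^2)$. So as written, the case analysis does not close.
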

	\begin{proof}
		Let $n_{f_1},n_{f_2},n_{f_3},n_{f_4}$ be four elements of $\cS$ such that
		\begin{equation}\label{eq:sidon-eq}
			n_{f_1}+n_{f_2}=n_{f_3}+n_{f_4},
		\end{equation}
		It suffices to prove 
		$\{n_{f_1},n_{f_2}\}=\{n_{f_3},n_{f_4}\}$. Suppose that $f_j\in\cF_{k_j}$ $(1\le j\le 4)$.
		We may assume, without loss of generality, that $k_1\ge k_2$ and $k_3\ge k_4$.
		Then, by Lemma \ref{lem:size}, we have
		\[
		q^{k_1^2+O(k_1)}= n_{f_1}+n_{f_2}=n_{f_3}+n_{f_4}=q^{k_3^2+O(k_3)},
		\]
		which implies that 
		\begin{align}\label{eq-size-1}
			k_1=k_3+O(1).
		\end{align}
		Put 
		$$
		L=\min \big(\lfloor\alpha k_1\rfloor,\lfloor\alpha k_3\rfloor\big).
		$$
		Exchanging the order of pairs in \eqref{eq:sidon-eq} if necessary, we may further assume $k_2\ge k_4$.
		Reduce the identity $n_{f_1}+n_{f_2}=n_{f_3}+n_{f_4}$ modulo $\prod_{i=1}^{2L}b_i$, and then write the common residue in base $\bb$. Then, we get
		\[
		\left[n_{f_1}+n_{f_2}\ {\rm mod}\  {\prod_{i=1}^{2L}b_i}\right]= \left[n_{f_3}+n_{f_4}\ {\rm mod}\ {\prod_{i=1}^{2L}b_i}\right]:=\overline{y_{L}\ x_{L}\ \cdots\ y_1\ x_1}^{\,\bb},
		\]
		where $0\leq x_i<q^{2i-1}-1$ and $0\leq y_i<p$ for $1\leq i\leq L$.

		We are going to prove
		\begin{equation}\label{eq:k2k4}
			k_4\ge \alpha k_2-9.
		\end{equation}
		Assume the contrary, i.e., 
		$
		\alpha k_2-5>k_4+4.
		$
		Then, we can choose an integer $\ell$ such that
		\[
		k_4+4<\ell \le \alpha k_2-4.
		\]
		We first show that $\ell< L$. In view of  \eqref{ineq:1} and \eqref{ineq:2}, we know  
		\begin{align}\label{eq-Sidon-1}
			q^{k_i^2}\leq n_{f_i}<q^{k_i^2+4k_i+1}. 
		\end{align}
		Then by \eqref{eq-Sidon-1} we have
		$
		n_{f_1}+n_{f_2}> q^{k_2^2}
		$
		and
		$$
		n_{f_3}+n_{f_4}<q^{k_3^2+4k_3+1}+q^{k_4^2+4k_4+1}\le 2q^{k_3^2+4k_3+1}<q^{k_3^2+4k_3+2}.
		$$
		Hence, by the identity $n_{f_1}+n_{f_2}=n_{f_3}+n_{f_4}$ we have
		$$
		k_2^2<k_3^2+4k_3+2<(k_3+2)^2,
		$$
		which means $k_2<k_3+2$. Therefore,
		$$
		\ell\le \alpha k_2-4<\alpha (k_3+2)-4<\alpha k_3-2.
		$$
		Moreover, we also have 
		$
			\ell \le \alpha k_1-4,
		$
		from which it follows that $\ell<L$.
		Since $\ell<L$, we have
		\[
		\left[n_{f_1}+n_{f_2}\ {\rm mod}\ {\prod_{i=1}^{2\ell}b_i}\right]=\left[n_{f_3}+n_{f_4}\ {\rm mod}\ {\prod_{i=1}^{2\ell}b_i}\right]=\overline{y_{\ell}\ x_{\ell}\ \cdots\ y_1\ x_1}^{\,\bb}.
		\]
		For better clarity, we refer to the following table throughout the remainder of the proof.
\begin{table}[htbp]
\renewcommand{\arraystretch}{1.7}
\renewcommand{\tabcolsep}{1.8mm}
\begin{tabular}{c|ccccc}
\hline$n_{f_1}$ & $r_{\ell}(f_1)$ & $e_{\ell}(f_1)$ & $\cdots$ & $r_1(f_1)$ & $e_1(f_1)$ \\
$n_{f_2}$ & $r_{\ell}(f_2)$ & $e_{\ell}(f_2)$ & $\cdots$ & $r_1(f_2)$ & $e_1(f_2)$ \\
\hline$n_{f_1}+n_{f_2}$ & $y_{\ell}$ & $x_{\ell}$ & $\cdots$ & $y_1$ & $x_1$ \\
\hline
\end{tabular}
\end{table}

		For $1\leq i\leq \ell$, we have 
		$$
		x_i=\Big[e_{i}(f_1)+e_{i}(f_2)\ {\rm mod}\ {q^{2i-1}-1}\Big]<q^{2i-1}-1
		$$ and
		$$
		y_i= r_{i}(f_1)+r_{i}(f_2)+\mathbf{1}_{e_{i}(f_1)+e_{i}(f_2)\geq q^{2i-1}-1}\le 2(\lfloor p/2\rfloor-1)+1<p.
		$$
		
		For the summation $n_{f_3}+n_{f_4}$, we refer to the following table.
\begin{table}[htbp]
\renewcommand{\arraystretch}{1.7}
\renewcommand{\tabcolsep}{1.8mm}
\begin{tabular}{c|cccccccc}
\hline 
$n_{f_3}$ & $r_{\ell}(f_3)$ & $e_{\ell}(f_3)$ & $\cdots$ & $r_{k_4+1}(f_3)$ & $e_{k_4+1}(f_3)$ & $r_{k_4}(f_3)$ & $e_{k_4}(f_3)$ & $\cdots$\\
$n_{f_4}$ & 0 & 0 & $\cdots$ & 0 & $s(f_4)$ & $R_{k_4}(f_4)$ & $E_{k_4}(f_4)$ & $\cdots$ \\
\hline 
$n_{f_3}+n_{f_4}$ & $y_{\ell}$ & $x_{\ell}$ & $\cdots$ & $y_{k_4+1}$ & $x_{k_4+1}$ & $y_{k_4}$ & $x_{k_4}$ & $\cdots$ \\
\hline
\end{tabular}
\end{table}

		For $1\leq i\leq \alpha k_4$, we have
		$$
		x_i=\Big[e_{i}(f_3)+e_{i}(f_4)\ {\rm mod}\  {q^{2i-1}-1}\Big]<q^{2i-1}-1
		$$ 
		and
		\[
		y_i= r_{i}(f_3)+r_{i}(f_4)+\mathbf{1}_{e_{i}(f_3)+e_{i}(f_4)\geq q^{2i-1}-1}\le 2(\lfloor p/2\rfloor-1)+1<p.
		\]
		Since 
		$$
		n_{f_4}<q^{(k_4+2)^2}<\overline{1 \underbrace{00\cdots 00}_{2k_4+4\ \text{zeros}}}^{\,\bb}
		$$ 
		from \eqref{ineq:2}, $\ell>k_4+4$ and $r_{\ell-1}(f_3)+1\leq p/2$ , we see that $n_{f_4}$ does not contribute either a separator digit or a carry to $y_\ell$. Thus, we obtain
		$$
		y_\ell=r_{\ell}(f_3)\in \cA.
		$$
		While,
		\[
		y_\ell= r_{\ell}(f_1)+r_{\ell}(f_2)+\mathbf{1}_{e_{\ell}(f_1)+e_{\ell}(f_2)\geq q^{2\ell-1}-1}\in \cA+\cA+\{0,1\}.
		\]
		This contradicts the property of $\cA$ in Lemma \ref{lem:aux}:  
		\[
		\cA\cap(\cA+\cA+\{0,1\})=\emptyset,
		\]
		giving the proof of \eqref{eq:k2k4}.
		
		\smallskip
		Based on the above analysis, for $1\leq i\leq \alpha k_4$, we obtain
		\[
		x_i=\Big[e_{i}(f_1)+e_{i}(f_2)\ {\rm mod}\ {q^{2i-1}-1}\Big]=\Big[e_{i}(f_3)+e_{i}(f_4)\ {\rm mod}\ {q^{2i-1}-1}\Big],
		\]
		and for $ k_2+4<i\le L$, we obtain
		\[
		x_i=e_{i}(f_1)=e_{i}(f_3).
		\]
		This implies, by the definition of $e_i$ and the Chinese Remainder theorem, that
		\begin{equation}\label{eq:prod-cong}
			f_1f_2\equiv f_3f_4\bmod {G},
			\qquad
			G:=\prod_{i\le \alpha k_4}g_i,
		\end{equation}
		and
		\begin{equation}\label{eq:single-cong}
			f_1\equiv f_3\bmod H,
			\qquad
			H:=\prod_{k_2+4<i\le  L}g_i.
		\end{equation}
		Now, we divide the discussion into three cases.
		
		{\it Case I.} $f_1=f_3$. In this case, we have $n_{f_1}=n_{f_3}$, and hence $n_{f_2}=n_{f_4}$ from \eqref{eq:sidon-eq}.
		
		{\it Case II.} $f_1f_2=f_3f_4$. Since $f_i$ $(1\leq i\leq 4)$ is irreducible in $\F_q[t]$, we have
		\[
		\{f_1,f_2\}=\{f_3,f_4\},
		\]
		which means  $\{n_{f_1},n_{f_2}\}=\{n_{f_3},n_{f_4}\}$.
		
		{\it Case III.} $f_1\neq f_3$ and $f_1f_2\neq f_3f_4$. 
		From $k_2\geq k_4$, \eqref{eq:prod-cong} and \eqref{eq-size-1}, we have
		$$
		\deg (G)\leq\deg (f_1f_2-f_3f_4)\leq \deg(f_1)+O(k_1)+\deg(f_2)\le  ck_1^2+ck_2^2+O(k_1).
		$$
		On the other hand, by the definition of $G$ and \eqref{eq:k2k4} we have
		$$
		\deg(G)=\sum_{i\le \alpha k_4}\deg(g_i)=\alpha^2k_4^2+O(k_4)\ge \alpha^4k_2^2-O(k_2).
		$$
		Therefore, 
		\begin{align*}
			\alpha^4k_2^2\le ck_1^2+ck_2^2+O(k_1),
		\end{align*}
		which implies that
		\begin{align}\label{eq-1234-1}
			\frac{k_2^2}{k_1^2}\le \frac{c}{\alpha^4-c}+O(1/k_1),
		\end{align}
		provided that $\alpha^4-c>0$. 
		From \eqref{eq:single-cong} and \eqref{eq-size-1} we have
		$$
		\deg (H)\le \deg(f_1-f_3)\le ck_1^2+O(k_1).
		$$
		By the definition of $H$ we see
		$$
		\deg (H)=\sum_{k_2+4<i\le L}\deg(g_i)\ge \alpha^2k_1^2-k_2^2-O(k_1).
		$$
		Therefore,
		\begin{align*}
			\alpha^2k_1^2-k_2^2\le ck_1^2+O(k_1),
		\end{align*}
		which implies that
		\begin{align}\label{eq-1234-2}
			\frac{k_2^2}{k_1^2}\ge \alpha^2-c-O(1/k_1).
		\end{align}
		Combining \eqref{eq-1234-1} with \eqref{eq-1234-2} gives
		$$
		\alpha^2-c-O(1/k_1)\leq\frac{k_2^2}{k_1^2}\leq \frac{c}{\alpha^4-c}+O(1/k_1).
		$$
		Since $K$ is sufficiently large, this is a contradiction to constraint 
		\[
		(\alpha^2-c)(\alpha^4-c)>c
		\]
		in \eqref{eq:parameters}. Thus, {\it Case III} can not occur. This completes the proof of our lemma.
	\end{proof}
    \begin{remark}
  For $\{n_{f_1},n_{f_2}\}=\{n_{f_3},n_{f_4}\}$, by Lemma \ref{lem:injective}, we obtain 
  $\{f_1,f_2\}=\{f_3,f_4\}$. This implies the same result as Pilatte's result \cite[Lemma 4.1]{Pi24}. 
\end{remark}
Moreover, we can estimate $\cS(x)$ by Lemma \ref{lem:injective}.
  \begin{lemma}\label{lem:sizeS}
For every sufficiently large $x$, we have
\[
\cS(x)=x^{c+o(1)}.
\]
\end{lemma}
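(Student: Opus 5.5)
By Lemma \ref{lem:injective}, the map $f\mapsto n_f$ is injective on $\cF$. So $\cS(x)$ equals the number of $f\in\cF$ with $n_f\le x$, and the task reduces to counting polynomials in the blocks $\cF_k$ together with the size estimate of Lemma \ref{lem:size}. Given large $x$, let $k=k(x)$ be the largest integer with $q^{k^2+4k+1}\le x$. Then \eqref{ineq:1} and \eqref{ineq:2} give
\[
\bigcup_{K\le j\le k}\cF_j\ \subset\ \{f\in\cF:\ n_f\le x\}\ \subset\ \bigcup_{K\le j\le k+O(1)}\cF_j .
\]
The upper inclusion holds because $n_f\ge q^{j^2}$ for $f\in\cF_j$, so $n_f\le x$ forces $j^2\le \log_q x=k^2+O(k)$, i.e.\ $j\le k+O(1)$. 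Since $q$ is fixed, $\log_q x=k^2+O(k)$, which means $k=(1+o(1))\sqrt{\log_q x}$.

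Next I will estimate $|\cF_j|$ with Lemma \ref{lem:prime}. We have $|\cF_j|=\sum_{cj^2\le 2i<c(j+1)^2}|\cP_{2i}|$, and each term is $q^{2i}(1+O(q^{-i}))/(2i)$. The range of $2i$ is an interval of length $O(j)$ ending near $c(j+1)^2$, and its top term dominates geometrically. Hence
\[
|\cF_j|=q^{c j^2+O(j)}.
\]
The range is nonempty because it has length $c(2j+1)\ge 2$ once $K$ is large, so there is an even integer $2i$ in it. Summing over $K\le j\le k+O(1)$, the largest block again dominates, and both the lower and upper inclusions give
\[
\cS(x)=q^{ck^2+O(k)} .
\]

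Finally, I convert this to a power of $x$. Since $q^{k^2}=x^{1+O(1/k)}$ and $q^{O(k)}=x^{O(1/k)}$, we get $\cS(x)=x^{c+O(1/k)}=x^{c+o(1)}$ as $x\to\infty$. No step here is a real obstacle. The only points needing care are the $O(k)$ exponent errors: the discrepancy between $j^2$ and $j^2+4j+1$ in Lemma \ref{lem:size}, the overlapping block $j=k+O(1)$ that is only partially counted, and the nonemptiness of the $\cF_j$. All of these are absorbed into $o(1)$ in the exponent.
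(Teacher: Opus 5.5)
Your proposal is correct and follows the paper's argument: injectivity from Lemma \ref{lem:injective}, a sandwich between unions of blocks $\cF_j$ via \eqref{ineq:1}--\eqref{ineq:2}, and Lemma \ref{lem:prime} to get $\cS(x)=q^{ck^2+O(k)}=x^{c+o(1)}$. The differences are only cosmetic: you normalize $k$ by $q^{k^2+4k+1}\le x$ and use all blocks $\cF_j$ with $j\le k$ for the lower bound, whereas the paper normalizes by $q^{k^2}\le x$ and uses the single block $\cF_{k-2}$.
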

\begin{proof}
For every sufficiently large $x$, let $k$ be the largest integer such that
\[
q^{k^2}\leq x.
\]
Then
\[
q^{k^2}\leq x<q^{(k+1)^2}.
\]
For any $n_f\in \cS\cap\{1,2,\ldots,\lfloor x\rfloor\}$,  by \eqref{ineq:1}, we obtain that $f\in \cF_i$ for some $i\leq k+1$. And by \eqref{ineq:2}, we obtain that every
$f\in\cF_{k-2}$ gives an element $n_f< q^{k^2}\leq x$. In addition, by
Lemma~\ref{lem:injective}, these elements $n_f$ are distinct. Therefore, 
\[
\sum_{\substack{c (k-2)^2\leq 2j < c (k-1)^2}} |\cP_{2j}|\leq \cS(x)\leq \sum_{i\leq k+1}\sum_{\substack{ci^2\leq 2j < c (i+1)^2}}|\cP_{2j}|.
\]
By Lemma \ref{lem:prime}, we obtain
\[
\cS(x)=q^{ck^2+O(k)}=x^{c+o(1)},
\]
which gives the result.
\end{proof}
	
	\section{\texorpdfstring{$\cS$}{S} is an asymptotic basis of order \texorpdfstring{$3-\eta$}{3-eta} with probability one}\label{Sec-5}

Let $m$ be a sufficiently large integer. We first estimate
		\[
		\mathbb P\big(n_{f_0}+n_{f_1}+n_{f_2}=m)
        \]
with $(f_0,f_1,f_2)$ in a suitable subset of $\cF^3$.

\subsection{Auxiliary lemmas} 
	\begin{lemma}\label{lem:expansion}
		Let $m\in \N$ be sufficiently large and $k$ be the largest integer such that
		\[
		10 \prod_{i\leq k}p(q^{2i-1}-1)\le m.
		\]
		Then $m=q^{k^2+O(k)}$. Let $\ell_*$ and $k_*$ be two integers such that $0\le \ell_* \le k_* \le k$. Then $m$ can be represented as
		\begin{equation}\label{eq:target-expansion}
			m=\overline{z y_k x_k \cdots y_1 x_1}^{\bb}
		\end{equation}
		with
		\[
		0\le x_i<q^{2i-1}-1,
		\qquad
		0\le y_i\le 3p/2,
		\qquad
		8\le z< 10q^{2k+2},
		\]
		satisfying the following properties:
		
		$(i)$ If $1\le i\le \ell_*$, then 
		$$
		\{y_i-2,y_i-1,y_i\}\subset \cA+\cA+\cA.
		$$
		
		$(ii)$ If $\ell_*<i\le k_*$, then 
		$$
		\{y_i-2,y_i-1,y_i\}\subset \cA+\cA+\cW.
		$$
		
		$(iii)$ If $k_*<i\le k$, then 
		$$
		\{y_i-1,y_i\}\subset \cW+\cW.
		$$
	\end{lemma}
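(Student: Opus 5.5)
We construct the expansion \eqref{eq:target-expansion} greedily from the least significant position upward, absorbing any deficiency as a borrow into the next odd-indexed digit. Before that, the size claim is immediate. Put $B_k:=\prod_{i\le k}p(q^{2i-1}-1)$. By the computation in \eqref{ineq:1}, $B_k=q^{k^2+O(k)}$, and maximality of $k$ gives $10B_k\le m<10B_{k+1}=10p(q^{2k+1}-1)B_k$. Hence $m=q^{k^2+O(k)}$.

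The key input is the window that Lemma \ref{lem:aux} and its Remark supply for each digit type. By the Remark, $\cA+\cA+\cA\supset[p/5,7p/5]$. Since $\cA+\cA+\cW\supset\cW+\min(\cA+\cA)$ and $\min\cA\ge1$, this sumset contains an interval of length $p$ beginning at some $a_0\ge 2$ with $a_0<p$. Also $\cW+\cW=\{0,\dots,2p-2\}$. In each case we therefore obtain an interval $I_i=[u_i,u_i+p+1]$ such that $y\in I_i$ already implies the required condition (i), (ii) or (iii). One needs $u_i\ge 2$ in cases (i) and (ii), with $I_i\subset[p/5+2,7p/5]$ in case (i), and one needs $u_i\ge1$ in case (iii). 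The intervals must also satisfy $u_i+p+1\le 3p/2$, and in case (i) there is room because $7p/5-p/5-2\ge p+1$ for large $p$. In every case $I_i$ contains $p$ consecutive integers all lying in $[2,3p/2]$, possibly with one shifted by the carry.

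The inductive step runs as follows, starting from $m_0=m$. At stage $i$, let $x_i$ be the residue of the current remainder modulo $q^{2i-1}-1$, and pass to the quotient $Q$. Next choose $y_i$ with $y_i\equiv Q\pmod p$ and $y_i\in[u_i,u_i+p-1]\subset I_i$; this is possible because that interval has length $p$. Set the new remainder to $(Q-y_i)/p$. This equals $\lfloor Q/p\rfloor$ minus a borrow of at most $\lceil 3p/2/p\rceil\le 2$. Iterating for $i=1,\dots,k$ gives \eqref{eq:target-expansion}, with $z$ equal to the final remainder and all digits in range by construction.

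The only genuine obstacle is the bound $8\le z<10q^{2k+2}$. The total borrow is controlled because it subtracts at most $\sum_i 2\prod_{j<i}b_j\cdots$. More precisely, $\sum_i y_i\prod_{j<2i}b_j\le \frac32\sum_i p\prod_{j<2i}b_j<2B_k$ roughly, since the products grow geometrically. Therefore $zB_k=m-(\text{lower part})\ge 10B_k-2B_k$, which gives $z\ge 8$. From the other side, $zB_k\le m<10q^{2k+1}B_k$, which gives $z<10q^{2k+2}$. I would verify carefully the geometric-series estimate $\sum_{i\le k}\bigl(y_i\prod_{j<2i}b_j+x_i\prod_{j<2i-1}b_j\bigr)<2B_k$, using $y_i\le 3p/2$ and $x_i<b_{2i-1}$. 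This holds because each term is at most $\frac{3}{2}\prod_{j\le 2i}b_j$ and consecutive full products grow by a factor of at least $pq\ge 4$. Summing gives at most $\frac32\cdot\frac{4}{3}B_k=2B_k$, and this is exactly where the constant $10$ and the lower bound $8$ come from.
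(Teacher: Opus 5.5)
Your construction is the paper's: you extract $x_i$ greedily as residues modulo $q^{2i-1}-1$, choose $y_i$ as the representative of a residue class mod $p$ in a window of length $p$, and bound $z$ by sandwiching $m$ between $zB_k$ and $(z+2)B_k$. However, your window for case (ii) does not work as justified. From $\cA+\cA+\cW\supset\cW+\min(\cA+\cA)$ you only get the $p$ consecutive integers $\{a_0,\dots,a_0+p-1\}$. Then $\{y-2,y-1,y\}$ lies in this set only for the $p-2$ values $a_0+2\le y\le a_0+p-1$, so two residue classes modulo $p$ are left uncovered. Your interval argument for (ii) needs a run of $p+2$ consecutive integers, exactly as in (i).

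The repair is one line. Since $\cA\subset\{1,\dots,\lfloor p/2\rfloor-1\}\subset\cW$, we have $\cA+\cA+\cA\subset\cA+\cA+\cW$, so you can use the case (i) window for every $i\le k_*$. This is what the paper does: it takes $[u,u+p+1]\subset\cA+\cA+\cA$ from Lemma~\ref{lem:aux} and picks $y_i\in[u+2,u+p+1]$ for all $i\le k_*$. The Remark's interval $[p/5,7p/5]$ is not needed for this.

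There are also two smaller slips in the bound on $z$. First, the upper bound should read $zB_k\le m<10B_{k+1}=10p(q^{2k+1}-1)B_k<10q^{2k+2}B_k$; your $10q^{2k+1}B_k$ drops the factor $p$, although your final conclusion is correct. Second, the $i$-th term of the lower part is bounded by $\frac32\prod_{j\le 2i}b_j+\prod_{j\le 2i-1}b_j$, not by $\frac32\prod_{j\le 2i}b_j$, and the consecutive ratios are $p(q^{2i-1}-1)$ rather than at least $pq$. Because $p$ and $q$ are large, the lower part is still $(\frac32+o(1))B_k<2B_k$, so $z\ge 8$ survives. The paper gets this more cleanly: $y_i\le 3p/2\le 2(p-1)$ and $x_i\le 2(q^{2i-1}-2)$ give a lower part of at most $2(B_k-1)$.
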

	
	\begin{proof}
		By the definition of $k$, we have
		\begin{align}\label{eq-0622-3}
		10 \prod_{i\leq k}p(q^{2i-1}-1)\le m<10 \prod_{i\leq k+1}p(q^{2i-1}-1),
		\end{align}
		which clearly implies $m=q^{k^2+O(k)}$. It remains to show that $m$ has the desired expansion.
		
		Let $M_1=m$.
		Suppose that $M_{2i-1}$ has already been defined. Choose 
		\[
		x_i=\Big[M_{2i-1}\ {\rm mod}\ {q^{2i-1}-1}\Big]
		\]
		and
		\[
		M_{2i}=\frac{M_{2i-1}-x_i}{q^{2i-1}-1}.
		\]
		Then $M_{2i}$ is an integer, and 
		\[
		0\le x_i<q^{2i-1}-1.
		\]
		Next, we give definitions of $y_i$ and  $M_{2i+1}$ for any $1\le i\le k$.
		
		By Lemma~\ref{lem:aux}, the set $\cA+\cA+\cA$ contains $p+2$
		consecutive integers. Thus, there exists an integer $u$ with $0\leq u\leq p/2-2$ such that 
		\[
		\{u,u+1,\ldots,u+p+1\}\subset \cA+\cA+\cA. 
		\]
For $1\le i\le k_*$, we choose 
		\[
		y_i=
		\begin{cases}
			\big[M_{2i}\ {\rm mod}\  p\big], &\text{if~}\big[M_{2i}\ {\rm mod}\ p\big]\geq u+2,\\
			\big[M_{2i}\ {\rm mod}\ p\big]+p, &\text{otherwise}.
		\end{cases}
		\]
		So, for $1\le i\le k_*$, we have 
		$$
		\big\{y_i-2,y_i-1,y_i\big\}\subset \cA+\cA+\cA.
		$$
		Therefore, the property $(i)$ holds for any $0\leq \ell_{*}\leq k_{*}$.
		The property $(ii)$ follows immediately from the inclusion
		$$
		\cA+\cA+\cA\subset \cA+\cA+\cW.
		$$
		For $k_*<i\le k$, we choose the unique element $y_i$
		that satisfies
		\[
		y_i\equiv M_{2i}\bmod p \quad \text{and} \quad y_i\in\big\{1,2\ldots,p\big\}.
		\]
		Hence, $\big\{y_i-1,y_i\big\}\subset \cW+\cW$.
		Now, let
		\[
		M_{2i+1}=\frac{M_{2i}-y_i}{p}.
		\]
		By the above choices, we see that $M_{2i+1}$ is an integer and $0\le y_i\leq 3p/2$ for any $1\le i\le k$.
		
		After carrying out this for $i=1,\ldots,k$, we define
		\[
		z=M_{2k+1},
		\]
        and therefore 
        \[
        m=\overline{z y_k x_k \cdots y_1 x_1}^{\bb}.
        \]
		It remains to bound $z$. It is clear that
		\begin{align}\label{eq-0622-1}
			m\geq \overline{z \underbrace{00\cdots 00}_{2k\ \text{zeros}}}^{\bb}=z\prod_{j=1}^{2k}b_j=z\prod_{j=1}^{k}p\left(q^{2j-1}-1\right).
		\end{align}
		On the other hand,
		\begin{align}\label{eq-0622-2}
			m&\le  \overline{z \frac{3p}{2}\left(q^{2k-1}-2\right)\cdots \frac{3p}{2}\left(q-2\right)}^{\bb} \nonumber\\
			&\le \overline{z \underbrace{00\cdots 00}_{2k\ \text{zeros}}}^{\bb}+2\times\overline{(p-1)\left(q^{2k-1}-2\right)\cdots (p-1)\left(q-2\right)}^{\bb}\nonumber\\
			&\le \overline{(z+2) \underbrace{00\cdots 00}_{2k\ \text{zeros}}}^{\bb} \nonumber\\
			&=(z+2)p^k\prod_{j=1}^{k}(q^{2j-1}-1).
		\end{align}
		
		Combining \eqref{eq-0622-3}, \eqref{eq-0622-1} and \eqref{eq-0622-2}  gives
		$$
		10\prod_{i=1}^{k}p(q^{2i-1}-1)\leq (z+2)p^k\prod_{j=1}^{k}(q^{2j-1}-1)
		$$
		and
		$$
		z\prod_{j=1}^{k}p\left(q^{2j-1}-1\right)<10\prod_{i=1}^{k+1}p(q^{2i-1}-1),
		$$
	which implies that
		\[
		8\leq z< 10q^{2k+2}.
		\]
		Together with the previously established properties of the digits $x_i$ and $y_i$, this proves the lemma.
	\end{proof}

	Let $m$ be a sufficiently large integer. Choose $k$ as in Lemma \ref{lem:expansion}, and put
	\[
	k_*:=\lfloor\alpha k\rfloor,
	\qquad
	\ell_*:=\lfloor\alpha k_{*}\rfloor.
	\]
	Then
	\begin{equation}\label{eq:kl}
		k_*^2=\alpha^2 k^2+O(k)
		\quad \text{and} \quad
		\ell_*^2=\alpha^4k^2+O(k).
	\end{equation}
	Choose even integers $d$ and $d_0$ satisfying
	\begin{equation}\label{eq:d-degrees}
		ck^2\le d<c(k+1)^2,
		\qquad
		ck_{*}^2\le d_0<c(k_{*}+1)^2.
	\end{equation} 
	Let $x_i$ $(1\leq i\leq k)$ be defined in Lemma \ref{lem:expansion}. Recall that $\omega_i$ is one of the generators of $\big(\F_q[t]/(g_i)\big)^\times$. Define
	\begin{equation}\label{eq:cC}
			\cC_m=\big\{(f_0,f_1,f_2)\in \cP_{d_0}\times (\cP_d)^2:f_1\neq f_2,f_0f_1f_2\equiv \omega_i^{x_i}\bmod {g_i} \ \text{for} \ 1\leq i\leq \ell_{*}\big\}.
	\end{equation}
	
	For the probabilistic argument in Lemma \ref{lem:prob:equal}, we need the following set $\cC_m'\subset \cC_m$.
	
	\begin{lemma}\label{lem:set:norepeated}
		There is a subset $\cC_m^{\prime}\subset \cC_m$ such that no  polynomial $f$ can occur in the coordinate components of two distinct triples of $\cC_m^{\prime}$ and
		\[
		|\cC_m^{\prime}|= \frac{1}{2}|\cC_m|.
		\]
	\end{lemma}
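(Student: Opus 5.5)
The idea is that, with our parameters, the congruence condition modulo $G:=\prod_{i\le\ell_*}g_i$ determines a product of two coordinates \emph{as a polynomial}, not merely as a residue class. Unique factorization in $\F_q[t]$ then forces two triples of $\cC_m$ that share a polynomial to differ only by the swap $f_1\leftrightarrow f_2$. We then take $\cC_m'$ to be one representative from each swap pair.

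\textbf{Step 1 (degrees).} By \eqref{eq:kl} and \eqref{eq:d-degrees},
\[
\deg G=\sum_{i\le\ell_*}(2i-1)=\ell_*^2=\alpha^4k^2+O(k),
\]
while $2d=2ck^2+O(k)$ and $d+d_0=c(1+\alpha^2)k^2+O(k)\le 2ck^2+O(k)$. Since $\alpha^4>2c$ by \eqref{eq:parameters} and $K$ is large, both $2d$ and $d+d_0$ are strictly less than $\deg G$. Also $d_0<d$, because $d_0\approx c\alpha^2k^2<ck^2\le d$. In particular $d_0\neq d$.

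\textbf{Step 2 (rigidity).} Suppose $(f_0,f_1,f_2),(f_0',f_1',f_2')\in\cC_m$ share a polynomial $f$. Since $d_0\ne d$, $f$ occupies a coordinate of the same type in both triples. There are two cases.
\begin{itemize}
\item If $f=f_0=f_0'$, then $f$ is coprime to $G$: its degree is even, while every $g_i$ is irreducible of odd degree. Hence $f_1f_2\equiv f_1'f_2'\bmod G$. Both sides are monic of degree $2d<\deg G$, so $f_1f_2=f_1'f_2'$. Unique factorization then gives $\{f_1,f_2\}=\{f_1',f_2'\}$.
\item If $f\in\cP_d$ appears in both triples, the same argument shows that the product of the other two coordinates is the same monic polynomial of degree $d+d_0<\deg G$. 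Its factorization into an irreducible of degree $d_0$ and one of degree $d$ is unique. Hence the two triples coincide up to swapping $f_1$ and $f_2$.
\end{itemize}
In every case the two triples lie in the same orbit of the involution $\sigma:(f_0,f_1,f_2)\mapsto(f_0,f_2,f_1)$.

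\textbf{Step 3 (selection).} The map $\sigma$ preserves $\cC_m$, because the defining congruence involves only the product $f_0f_1f_2$. It has no fixed points, because $f_1\ne f_2$. So $\cC_m$ splits into orbits of size exactly $2$. Choose one triple from each orbit, say the one with $f_1<f_2$ under some fixed ordering of $\cP_d$, and call this set $\cC_m'$. Then $|\cC_m'|=\tfrac12|\cC_m|$ exactly. By Step 2, two distinct triples of $\cC_m'$ lie in different orbits, so they cannot share any polynomial.

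The only real obstacle is Step 2: we must check that the residue modulo $G$ pins down an actual polynomial. This holds precisely because $\deg G$ exceeds both $2d$ and $d+d_0$, which is the constraint $\alpha^4>2c$ in \eqref{eq:parameters}. Everything else is bookkeeping. The size estimate for $|\cC_m|$ via Lemma \ref{lem:sawin} is not needed for this lemma.
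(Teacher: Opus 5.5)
Your proof is correct and follows essentially the same argument as the paper: cancel the shared polynomial modulo $G=\prod_{i\le\ell_*}g_i$, use $\deg G=\alpha^4k^2+O(k)>2ck^2+O(k)$ (from $\alpha^4>2c$) to upgrade the congruence to an equality of polynomials, and conclude by unique factorization and $d_0\neq d$. Your selection of one representative from each orbit of the fixed-point-free involution $(f_0,f_1,f_2)\mapsto(f_0,f_2,f_1)$ makes explicit the final step that the paper leaves implicit.
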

	\begin{proof}
		Since $g_i\in\cP_{2i-1}$ and $d, d_0$ are even integers, we see that
		$$
		\big(f_0f_1f_2, g_i\big)=1
		$$
		for any $(f_0,f_1,f_2)\in \cC_m$.
		By the definition of $\cC_m$ in \eqref{eq:cC} and the Chinese Remainder Theorem, there exists a polynomial $P_{\ell_*}$ with
		$\big(P_{\ell_*},\prod_{i=1}^{\ell_{*}}g_i\big)=1$ such that 
		\[
		f_0f_1f_2\equiv P_{\ell_{*}}\Bigl(\operatorname{mod}\ \prod_{i=1}^{\ell_*} g_i\Bigr).
		\]
		
		Assume firstly that $f\in \cP_{d_0}$. Since $f$ has degree $d_0$, it can appear only in the first coordinate of the triple. Suppose that there are two triples $(f,f_1,f_2), (f,f_3,f_4)\in \cC_m$. Then
		\[
		f_1f_2\equiv f_3f_4\equiv P_{\ell_*}f^{-1}\Bigl(\operatorname{mod}\ \prod_{i=1}^{\ell_*} g_i\Bigr).
		\]
		Since $k$ is sufficiently large, we have
		\[
		\deg(f_1f_2-f_3f_4)\leq 2d=2ck^2+O(k)
		\]
		from \eqref{eq:d-degrees}.
		Moreover, from \eqref{eq:kl},
		$$
		\deg \left(\prod_{i=1}^{\ell_{*}}g_i\right)=\alpha^4k^2+O(k).
		$$
		Therefore, $f_1f_2=f_3f_4$ since $2c<\alpha^4$ by (\ref{eq:parameters}).  Note that $f_i$ $(1\leq i\leq 4)$ is irreducible. It follows that $\{f_1,f_2\}=\{f_3,f_4\}$ and the triples containing $f$ can only be $(f,f_1,f_2)$ or $(f,f_2,f_1)$.
		
		It remains to consider $f\in \cP_{d}$. Suppose that there are two triples $(f_5,f,f_6),$ $(f_7,f,f_8)\in \cC_m$. Then
		\[ 
		f_5f_6\equiv f_7f_8\equiv   P_{\ell_*}f^{-1}\Bigl(\operatorname{mod}\ \prod_{i=1}^{\ell_*} g_i\Bigr).
		\] 
		Similarly,
		\[
		\deg(f_5f_6-f_7f_8)\leq d+d_0\le 2d\le 2ck^2+O(k).
		\]
		This again forces $f_5f_6=f_7f_8$. Since $d_0\neq d$ and $f_i$ $(5\leq i\leq 8)$ is irreducible, we obtain $f_5=f_7, f_6=f_8$. In addition, if $(f_5,f,f_6)\in \cC_m$, then so does $(f_5,f_6,f)$. Therefore, the triples containing $f$ can only be $(f_5,f,f_6)$ and $(f_5,f_6,f)$. 
		
		The above discussion shows that 
		we can find a suitable subset $\cC_m^{\prime}$ with $|\cC_m^{\prime}|= \frac{1}{2}|\cC_m|$.
	\end{proof}
	We next estimate the size of $\cC_m^{\prime}$.
	\begin{lemma} \label{lem:sizeC}
		Let $\cC_m^{\prime}$ be defined above in Lemma \ref{lem:set:norepeated}. Then
		\[
		|\cC_m^{\prime}|=q^{\beta k^2 +O(k)}, 
		\]
		where $\beta=c(\alpha^2+2)-\alpha^4$.
	\end{lemma}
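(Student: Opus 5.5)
Since $|\cC_m^{\prime}|=\frac12|\cC_m|$ by Lemma \ref{lem:set:norepeated}, it suffices to show $|\cC_m|=q^{\beta k^2+O(k)}$. The plan is to apply Lemma \ref{lem:sawin} with modulus
\[
g:=\prod_{i=1}^{\ell_*}g_i,
\]
degrees $d_0<d$ as in \eqref{eq:d-degrees} and $n=d_0+2d$. By the Chinese Remainder Theorem, the conditions $f_0f_1f_2\equiv\omega_i^{x_i}\bmod g_i$ for $1\le i\le\ell_*$ amount to a single congruence $f_0f_1f_2\equiv a\bmod g$ with $a\in(\F_q[t]/(g))^\times$. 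Hence $|\cC_m|=T(a;g;d_0,d)$.

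\textbf{Checking the hypotheses.} First, $g$ is squarefree, because the $g_i$ are distinct irreducibles of distinct degrees $2i-1$. Its irreducible factors have odd degree, whereas $d,d_0$ are even, so none has degree $d_0$ or $d$. Also $d_0<d$ for large $k$, since $k_*<k$. For degrees, \eqref{eq:kl} gives $\deg g=\ell_*^2=\alpha^4k^2+O(k)$, and \eqref{eq:d-degrees} gives
\[
n=d_0+2d=c(\alpha^2+2)k^2+O(k).
\]
Thus $\deg g/n\to \alpha^4/(c(2+\alpha^2))<1$, using $\gamma>0$ and $\alpha^4<1$. So we may fix $\theta$ strictly between this ratio and $1$, and then $\deg g\le\theta n$ for $k$ large.

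\textbf{Main term.} By Lemma \ref{lem:prime}, $|\cP_{d_0}||\cP_d|(|\cP_d|-1)=q^{n+O(\log k)}$. Also $\phi(g)=\prod_i(q^{2i-1}-1)=q^{\deg g+O(1)}$, the $O(1)$ being uniform since $\prod(1-q^{-(2i-1)})$ is bounded below. So the main term is
\[
q^{n-\deg g+O(\log k)}=q^{(c(2+\alpha^2)-\alpha^4)k^2+O(k)}=q^{\beta k^2+O(k)}.
\]

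\textbf{Error term (the main obstacle).} The error is $O\bigl(C_\theta^nq^{(n-\deg g)/2}\bigr)$, and we need it to be at most half the main term. This holds provided
\[
C_\theta^n\le \tfrac12 q^{(n-\deg g)/2+O(\log k)},\qquad\text{i.e. roughly}\qquad C_\theta\le q^{(\beta/(2c(2+\alpha^2)))-o(1)}.
\]
Since $\beta>0$ and $\theta$ is fixed depending only on $\alpha,c$, the constant $C_\theta$ is fixed, and taking $q$ sufficiently large (as allowed in the construction) makes the error negligible. Care is needed here that $q$ is chosen after $\alpha,c$ (hence $\theta$), and that the implied constants in Lemma \ref{lem:sawin} do not depend on $q$. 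With this in place, $|\cC_m|\asymp q^{\beta k^2+O(k)}$, and halving gives the claim.
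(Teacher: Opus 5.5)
Your proposal is correct and follows essentially the same route as the paper. You apply Lemma \ref{lem:sawin} with $g=\prod_{i\le\ell_*}g_i$, get the main term $q^{n-\deg g+O(k)}=q^{\beta k^2+O(k)}$, absorb the error $C_\theta^{n}q^{(n-\deg g)/2}$ by taking $q$ large relative to the fixed $\theta$, and halve via Lemma \ref{lem:set:norepeated}. Your hypothesis check (squarefreeness, odd versus even degrees, $d_0<d$, a fixed $\theta<1$) is more explicit than the paper's, which simply takes $\theta=\ell_*^2/(d_0+2d)<0.9$ and bounds the constant by $C_{0.9}$.
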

	\begin{proof}
		First, recall that each $g_i$ has odd degree and $d_0$, $d$ are even integers.
		Moreover, we have
		\[
		\deg \left(\prod_{i=1}^{\ell_{*}}g_i\right)={\ell_{*}}^2=\alpha^4k^2+O(k)
		\]
		and
		$$
		d_0+2d=c(\alpha^2+2)k^2+O(k).
		$$
		Let $P_{\ell_{*}}$ be defined as in Lemma \ref{lem:set:norepeated}.  From \eqref{choice}, $\prod_{i=1}^{\ell_{*}}g_i$ satisfies the hypotheses imposed on $g$ in Lemma \ref{lem:sawin} with $\theta=\frac{\ell_{*}^2}{d_0+2d}<0.9<1$. Thus, by applying Lemma \ref{lem:sawin} with
		\[
		g=\prod_{i=1}^{\ell_{*}}g_i\ \ \text{and} \ \ a=P_{\ell_{*}},
		\]
		we have
		\[
		|\cC_m|=T\left(P_{\ell_{*}};\prod_{i=1}^{\ell_{*}}g_i;d_0,d\right)=\frac{|\cP_{d_0}|\,|\cP_d|\big(|\cP_d|-1\big)}{\phi\left(\prod_{i=1}^{\ell_{*}}g_i\right)}
		+O\left(C_\theta^{\,d_0+2d}q^{\frac{d_0+2d-\ell_*^2}{2}}\right).
		\]
		Since $q$ is sufficiently large and
        \[
C_\theta^{\,d_0+2d}q^{\frac{d_0+2d-\ell_*^2}{2}}\ll q^{\frac{d_0+2d-\ell_*^2}{2}+\frac{3d\log C_{\theta}}{\log q}}\ll q^{((\alpha^2c+2c-\alpha^4)/2+3c\log C_{0.9}/\log q)k^2+O(k)}
        \]
    we get
        the error term is
		$
		\ll q^{\frac{3\beta k^2}{4}+O
			(k)}
		$
        from \eqref{choice}.
		By the definition of $\phi$, we have
		$$
		\phi\left(\prod_{i=1}^{\ell_{*}}g_i\right)=\prod_{i=1}^{\ell_{*}}(q^{\deg g_i}-1)=q^{\alpha^4k^2+O(k)}.
		$$
		Therefore, by Lemma \ref{lem:prime} we get
		$$
		\frac{|\cP_{d_0}|\,|\cP_d|\big(|\cP_d|-1\big)}{\phi\left(\prod_{i=1}^{\ell_{*}}g_i\right)}=q^{d_0+2d-\alpha^4k^2+O(k)}=q^{\beta k^2 +O(k)}.
		$$
		Our lemma now follows clearly from Lemma \ref{lem:set:norepeated}.
		\end{proof}
        
	The following is the key probabilistic estimate.	
			\begin{lemma}\label{lem:prob:equal}
			Let $m\in\N$ be sufficiently large. For any fixed $(f_0,f_1,f_2)\in \cC_m^{\prime}$, we have
			\[
			\mathbb P\big(n_{f_0}+n_{f_1}+n_{f_2}=m\big)
			\ge q^{-(1-\alpha^4)k^2-O(k)}.
			\]
		\end{lemma}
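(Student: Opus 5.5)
Fix $(f_0,f_1,f_2)\in\cC_m'$, with $f_0\in\cF_{k_*}$ and $f_1,f_2\in\cF_k$ by \eqref{eq:d-degrees}; these three polynomials are distinct. The idea is to exhibit one explicit choice of all the random digits of $n_{f_0},n_{f_1},n_{f_2}$ for which the digitwise sum, with carries, reproduces the expansion \eqref{eq:target-expansion} of $m$ from Lemma~\ref{lem:expansion} with our $\ell_*,k_*$. By independence, the probability of this single event is the product of the reciprocals of the ranges of the free random variables involved, and this product will be the claimed lower bound. The table for $n_{f_0}$ is as follows. It has deterministic digits $e_i(f_0)$ for $i\le \lfloor\alpha k_*\rfloor=\ell_*$ and $\cA$-digits $r_i(f_0)$ there. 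For $\ell_*<i\le k_*$ it has uniform digits $E_i(f_0)$ and $\cW$-digits $R_i(f_0)$. It ends with a top digit $s(f_0)$ in position $2k_*+1$. The tables for $n_{f_1},n_{f_2}$ have deterministic $e_i$ and $\cA$-digits for $i\le k_*$, then $E_i,R_i$ for $k_*<i\le k$, then $s(f_i)\le q^{3k}$.

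The matching proceeds from low positions to high, tracking the carry $\kappa_i\in\{0,1,2\}$ into each $y_i$. For $i\le\ell_*$, the $x$-digits are deterministic, and $e_i(f_0)+e_i(f_1)+e_i(f_2)\equiv x_i \bmod{q^{2i-1}-1}$ holds automatically from the defining congruence of $\cC_m$. The carry $\kappa_i\in\{0,1,2\}$ is therefore determined, and property (i) of Lemma~\ref{lem:expansion} lets us pick $r_i(f_0),r_i(f_1),r_i(f_2)\in\cA$ with sum $y_i-\kappa_i$. There are no further carries, since $y_i<b_{2i}$ up to the adjustment by $p$ built into Lemma~\ref{lem:expansion}. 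I would need to check how the extra $p$ in $y_i$ propagates. I expect it is absorbed because the lemma defines $M_{2i+1}$ after subtracting $y_i$, so the expansion is an exact identity with oversized digits and no carry is needed beyond what we impose.

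For $\ell_*<i\le k_*$, we prescribe $E_i(f_0)$ to be the unique value making the $x_i$ digit correct. This has probability $1/(q^{2i-1}-1)$, and the carry is again at most $2$. We then use property (ii) to prescribe $R_i(f_0)\in\cW$ with a fixed $\cA$-pair, at a cost of $1/p$. At $i=k_*+1$, $n_{f_0}$ contributes $s(f_0)$. For $k_*<i\le k$, we fix $E_i(f_1)$ (or both $E$'s, with one of them free, which only helps) so that the $x_i$ digit is correct, at a cost of about $q^{-(2i-1)}$. We choose $s(f_0)$ to cover position $2k_*+1$ together with a suitable $E_{k_*+1}(f_1)$; this costs $q^{-3k_*}$. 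By property (iii) we prescribe $R_i(f_1),R_i(f_2)\in\cW$ with carry at most $1$, at a cost of $p^{-2}$. Finally we need $s(f_1)+s(f_2)+\text{carry}=z$. Since $8\le z<10q^{2k+2}\le q^{3k}$, we can take $s(f_2)=1$ and $s(f_1)=z-1-\text{carry}$, at a cost of $q^{-6k}$.

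Multiplying these costs gives
\[
\prod_{\ell_*<i\le k}q^{-(2i-1)}\cdot p^{-O(k)}\, q^{-O(k)}=q^{-(k^2-\ell_*^2)-O(k)}=q^{-(1-\alpha^4)k^2-O(k)}
\]
by \eqref{eq:kl}, since $p<q$. The main obstacle is the bookkeeping of carries and of the boundary position $2k_*+1$, where $s(f_0)$ sits against $E_{k_*+1}(f_1),E_{k_*+1}(f_2)$. One has to verify that a consistent choice of digits always exists, and that the range $\{1,\dots,q^{3k_*}\}$ of $s(f_0)$ can hit the required residue: since $q^{3k_*}\ge q^{2k_*+1}$, a suitable $s(f_0)$ is available. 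Independence across the distinct $f_j$ and positions $i$ then justifies multiplying the probabilities.
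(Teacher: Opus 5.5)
Your proposal follows the paper's proof: the same position-by-position matching against the expansion of Lemma~\ref{lem:expansion}, with the same costs ($p^{-O(1)}$ for $i\le\ell_*$, $q^{-(2i-1)}p^{-O(1)}$ for $\ell_*<i\le k$, $q^{-3k_*}$ for $s(f_0)$, $q^{-6k}$ for $z$). Your guess about the oversized $y_i$ is also right: in both expansions the $y$-positions are left unreduced, so a carry only passes from an $x$-position to the adjacent $y$-position, none reaches $z$, and events at different positions are independent.

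Two details need fixing.

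First, this is not literally a single atomic event. For $k_*<i\le k$ you must leave $E_i(f_2)$ free and choose $E_i(f_1)$ and $R_i(f_1),R_i(f_2)$ as functions of it. If you prescribe both $E$'s, each such position costs $(q^{2i-1}-1)^{-2}$, and you only get $q^{-(2-\alpha^2-\alpha^4)k^2-O(k)}$.

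Second, at position $2k_*+1$ you should take $s(f_0)=1$, as the paper does, and let $E_{k_*+1}(f_1)$ absorb the residue. An arbitrary $s(f_0)\le q^{3k_*}$ in the right residue class can create a carry into $y_{k_*+1}$ far larger than the $\{0,1\}$ that property (iii) can absorb.
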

		
		\begin{proof}
			Write $m$ in the form in \eqref{eq:target-expansion}:
			$$
			m=\overline{zy_kx_k\cdots\ y_1x_1}^{\,\bb},
			$$
			where $k$, $x_i$, $y_i$ $(1\leq i\leq k)$ satisfy the properties asserted in Lemma \ref{lem:expansion}. We consider the probability $\mathbb P\big(n_{f_0}+n_{f_1}+n_{f_2}=m\big)$ with  $(f_0,f_1,f_2)\in\cC_m^{\prime}$. 
			For clarity, we shall use the following table: From top to bottom, each row represents the $\bb$ expansions of $n_{f_1}$, $n_{f_2}$, $n_{f_0}$, and $n_{f_0}+n_{f_1}+n_{f_2}$, respectively.
			
			\begin{table}[htbp]
				\centering
				\renewcommand{\arraystretch}{1.7}
				\setlength{\tabcolsep}{1.8mm}
				\adjustbox{max width=\textwidth}{%
					\begin{tabular}{c|*{13}{c}}
						\hline
						$n_{f_1}$
						&
						$s(f_1)$
						&
						$R_k(f_1)$
						&
						$E_k(f_1)$
						&
						$\cdots$
						&
						$E_{k_{*}+1}(f_1)$
						&
						$r_{k_{*}}(f_1)$
						&
						$e_{k_{*}}(f_1)$
						&
						$\cdots$
						&
						$r_{\ell_{*}+1}(f_1)$
						&
						$e_{\ell_{*}+1}(f_1)$
						&
						$r_{\ell_{*}}(f_1)$
						&
						$e_{\ell_{*}}(f_1)$
						&
						$\cdots$
						\\
						
						$n_{f_2}$
						&
						$s(f_2)$
						&
						$R_k(f_2)$
						&
						$E_k(f_2)$
						&
						$\cdots$
						&
						$E_{k_{*}+1}(f_2)$
						&
						$r_{k_{*}}(f_2)$
						&
						$e_{k_{*}}(f_2)$
						&
						$\cdots$
						&
						$r_{\ell_{*}+1}(f_2)$
						&
						$e_{\ell_{*}+1}(f_2)$
						&
						$r_{\ell_{*}}(f_2)$
						&
						$e_{\ell_{*}}(f_2)$
						&
						$\cdots$
						\\
						
						$n_{f_0}$
						&
						&
						&
						&
						&
						$s(f_0)$
						&
						$R_{k_{*}}(f_0)$
						&
						$E_{k_{*}}(f_0)$
						&
						$\cdots$
						&
						$R_{\ell_{*}+1}(f_0)$
						&
						$E_{\ell_{*}+1}(f_0)$
						&
						$r_{\ell_{*}}(f_0)$
						&
						$e_{\ell_{*}}(f_0)$
						&
						$\cdots$
						\\
						\hline
						
						$\sum_{i=0}^2n_{f_i}$
						&
						$z'$
						&
						$y_k'$
						&
						$x_k'$
						&
						$\cdots$
						&
						$x_{k_{*}+1}'$
						&
						$y_{k_{*}}'$
						&
						$x_{k_{*}}'$
						&
						$\cdots$
						&
						$y_{\ell_{*}+1}'$
						&
						$x_{\ell_{*}+1}'$
						&
						$y_{\ell_{*}}'$
						&
						$x_{\ell_{*}}'$
						&
						$\cdots$
						\\
						\hline
					\end{tabular}%
				}
			\end{table}
			Hence,
			\[
			n_{f_0}+n_{f_1}+n_{f_2}=\overline{z^{\prime}y_k^{\prime}x_k^{\prime}\cdots y_1^{\prime}x_1^{\prime}}^{\,\bb},
			\]
			where $x_i', y_i'$ and $z'$ are defined as follows:
			
			\begin{itemize}
				\item If $1\leq i\le {\ell_{*}}$, then
				\[
				\begin{aligned}
					x_i'
					&=
					\left[\sum_{j=0}^{2} e_i(f_j)
					\mod q^{2i-1}-1\right],\\
					y_i'
					&=
					\sum_{j=0}^{2}r_i(f_j)+\kappa_i,\quad \kappa_i=\left\lfloor
					\frac{e_i(f_1)+e_i(f_2)+e_i(f_0)}
					{q^{2i-1}-1}
					\right\rfloor
					\in\{0,1,2\}.
				\end{aligned}
				\]
				
				\item If ${\ell_{*}}< i\le k_{*}$, then
				\[
				\begin{aligned}
					x_i'
					&=
					\left[\sum_{j=1}^{2} e_i(f_j)+E_i(f_0)
					\mod q^{2i-1}-1\right],\\
					y_i'
					&=
					\sum_{j=1}^{2}r_i(f_j)+R_i(f_0)+\kappa_i,\quad \kappa_i=\left\lfloor
					\frac{e_i(f_1)+e_i(f_2)+E_i(f_0)}
					{q^{2i-1}-1}
					\right\rfloor
					\in\{0,1,2\}.
				\end{aligned}
				\]
				
				\item If $i=k_{*}+1$, then
				\[
				\begin{aligned}
					x_i'
					&=
					\left[\sum_{j=1}^{2}E_i(f_j)+s(f_0)
					\mod q^{2i-1}-1\right],\\
					y_i'
					&=
					\sum_{j=1}^{2}R_i(f_j)+\kappa_i,\quad \kappa_i=\left\lfloor
					\frac{E_i(f_1)+E_i(f_2)+s(f_0)}
					{q^{2i-1}-1}
					\right\rfloor.
				\end{aligned}
				\]
				
				\item If $k_{*}+1<i\le k$, then
				\[
				\begin{aligned}
					x_i'
					&=
					\left[\sum_{j=1}^{2}E_i(f_j)
					\mod q^{2i-1}-1\right],\\
					y_i'
					&=
					\sum_{j=1}^{2}R_i(f_j)+\kappa_i,\quad \kappa_i=\left\lfloor
					\frac{E_i(f_1)+E_i(f_2)}
					{q^{2i-1}-1}
					\right\rfloor\in \{0,1\}.
				\end{aligned}
				\]
				
				\item Finally,
				$z^{\prime}=s(f_1)+s(f_2).$
			\end{itemize}
			
			What we need to consider first are $y_{k_{*}+1}^{\prime}$. To ensure that $\kappa_{k_{*}+1}\in\{0,1\}$,
			we impose
			$$
			s(f_0)=1.
			$$
			Let $Q_i=q^{2i-1}-1$ for $1\leq i\leq k$. By the definition of $\cC_m$, we obtain
			\[
			\sum_{j=0}^{2} e_i(f_j)\equiv x_i \bmod {Q_i}
			\]
			for $1\leq i\leq \ell_{*}$, which implies that $x_i^{\prime}=x_i$  since $0\leq x_i<Q_i$. Therefore,
			\begin{align}\label{p}
				\mathbb P(n_{f_0}+n_{f_1}+n_{f_2}=m)
				&\geq  \mathbb P(s(f_0)=1,z^{\prime}=z,y_i^{\prime}=y_i, x_i^{\prime}=x_i \   \text{for}\ 1\leq i\leq k )\notag\\
				&=\mathbb P(z^{\prime}=z)\prod_{i=1}^{\ell_{*}}\mathbb P(y_i^{\prime}=y_i)\prod_{i=\ell_{*}+1,i\neq k_{*}+1}^{k}\mathbb P( x_i^{\prime}=x_i,y_i^{\prime}=y_i) \\
				&\ \ \ \ \times \mathbb P(s(f_0)=1,x_{k_{*}+1}^{\prime}=x_{k_{*}+1},y_{k_{*}+1}^{\prime}=y_{k_{*}+1}), \notag
			\end{align}
			where the last equality follows from the mutual independence of the random variables.
			
			For $1\le i\le \ell_{*}$, since $\{y_i-2,y_i-1,y_i\}\subset \cA+\cA+\cA$ by Lemma \ref{lem:expansion}, there exists one choice for $r_i(f_j)\in\cA$ $(0\le j\leq 2)$ such that
			$$
			y_i'=\sum_{j=0}^{2}r_i(f_j)+\kappa_i=y_i.
			$$
			Thus, 
			\begin{align}\label{p1}
				\mathbb P(y_i^{\prime}=y_i)\geq \frac{1}{p^3}, \ \ \text{for} \ \ 1\le i\le \ell_{*}.
			\end{align}
			
			For $\ell_{*}<i\leq k_{*}$, even though $e_i(f_1)$ and $e_i(f_2)$  are fixed, since  $E_i(f_0)$ is a random variable uniformly distributed on 
			\[ 
			\{0,1,\ldots,q^{2i-1}-2\}, 
			\] 
			as $E_i(f_0)$ varies, 
			$$
			\sum_{j=1}^{2} e_i(f_j)+E_i(f_0)
			$$
			runs through all residue classes modulo $Q_i$, which implies that
			there is a unique admissible value of  $E_i(f_0)$ such that
			$
			x_i^{\prime}= x_i
			$ for each $i$. Moreover, since $\{y_i-2,y_i-1,y_i\}\subset \cA+\cA+\cW$ by Lemma \ref{lem:expansion}, there exists one choice for $(r_i(f_1),r_i(f_2),R_i(f_0))\in \cA\times \cA\times\cW$ such that
			$$
			y_i'=
			\sum_{j=1}^{2}r_i(f_j)+R_i(f_0)+\kappa_i=y_i,
			$$
			no matter what $\kappa_i\in\{0,1,2\}$ is.  Thus, 
			\begin{align}\label{p2}
				\mathbb P( x_i^{\prime}=x_i,y_i^{\prime}=y_i)\geq \frac{1}{Q_ip^3} \ \ \text{for} \ \ \ell_{*}<i\leq k_{*}.
			\end{align}

			For $k_{*}<i\le k$, since $E_i(f_j)$ $(j=1,2)$ is a random variable uniformly distributed on 
			\[ 
			\{0,1,\ldots,q^{2i-1}-2\}. 
			\]  there exists one choice for $E_i(f_1)$ if we fix $E_i(f_2)$ such that $x_{i}^{\prime}=x_i$.  Moreover, $\{y_i-1,y_i\}\subset W+W$ by Lemma \ref{lem:expansion}, there exists one choice for $(R_i(f_1),R_i(f_2))\in \cW\times\cW$ such that
			$$
			y_i'=
			\sum_{j=1}^{2}R_i(f_j)+\kappa_i=y_i,
			$$
			no matter what $\kappa_i\in\{0,1\}$ is.
			Thus, 
			\begin{align}\label{p3}
				\mathbb P(s(f_0)=1,x_{k_{*}+1}^{\prime}=x_{k_{*}+1},y_{k_{*}+1}^{\prime}=y_{k_{*}+1})\geq \frac{1}{q^{3k_{*}}Q_{k_{*}+1}p^2},
			\end{align}
			and
			\begin{align}\label{p4}
				\mathbb P( x_i^{\prime}=x_i,y_i^{\prime}=y_i)\geq \frac{1}{Q_ip^2} \ \ \text{for} \ \ k_{*}+1<i\leq k.
			\end{align}

			Finally, since $s(f_1)+s(f_2)\in \{2,3\cdots, 2q^{3k}\}$ and $8\le z\le 10q^{2k+2}<q^{3k}$, we obtain
			\begin{align}\label{p5}
				\mathbb P(z^{\prime}=z)\geq1/q^{6k}.
			\end{align}
			Combining \eqref{p}, \eqref{p1}, \eqref{p2}, \eqref{p3}, \eqref{p4} and \eqref{p5} together gives
			\begin{align*}
				\mathbb{P}\big(n_{f_0}+n_{f_1}+n_{f_2}=m\big)&\geq \frac{1}{q^{6k}}\frac{1}{p^{3\ell_{*}}}\frac{1}{q^{3k_{*}}}\prod_{i=l_{*}+1}^{k}\frac{1}{Q_ip^3}=\frac{1}{q^{(1-\alpha^4)k^2+O(k)}},
			\end{align*}
			which implies the result.
		\end{proof}

	\subsection{Proof of Theorem \ref{thm:main}}
   By Lemmas~\ref{prop:sidon} and \ref{lem:sizeS}, the random set $\cS$ is indeed an infinite Sidon set. It therefore remains to prove that for any $0<\eta<0.0527$, $\cS$ is an asymptotic basis of order $3-\eta$ with positive probability. We prove a stronger result that $\cS$ is an asymptotic basis of order $3-\eta$ with probability one by using the following Borel--Cantelli lemma; see \cite[p.135]{HR83}.
	\begin{lemma}[Borel--Cantelli Lemma] 
		Let $\cE_1,\cE_2,\ldots$ be a sequence of events in a probability space. If 
		\[ 
		\sum_{m=1}^{\infty}\mathbb P(\cE_m)<\infty, \] 
		then 
		\[ 
		\mathbb P\Big(\bigcap_{M=1}^{\infty}\bigcup_{m=M}^{\infty}\cE_m\Big)=0. 
		\] 
		Equivalently, with probability one, only finitely many of the events $\cE_m$ occur. 
	\end{lemma}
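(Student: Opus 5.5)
The plan is the standard first-moment argument: bound the probability of the limsup event by the tails of the convergent series $\sum_m \mathbb P(\cE_m)$. No earlier result of the paper is needed; only monotonicity and countable subadditivity of $\mathbb P$ are used.

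\emph{Step 1 (nested tail unions).} For each $M\ge 1$ put $\cE^{(M)}:=\bigcup_{m=M}^{\infty}\cE_m$. These sets decrease in $M$, and
\[
\bigcap_{M=1}^{\infty}\bigcup_{m=M}^{\infty}\cE_m=\bigcap_{M=1}^{\infty}\cE^{(M)}\subset \cE^{(M_0)}
\]
for every fixed $M_0$.

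\emph{Step 2 (subadditivity and the tail of the series).} By monotonicity and countable subadditivity,
\[
\mathbb P\Big(\bigcap_{M=1}^{\infty}\bigcup_{m=M}^{\infty}\cE_m\Big)\le \mathbb P\big(\cE^{(M_0)}\big)\le \sum_{m=M_0}^{\infty}\mathbb P(\cE_m).
\]
The right-hand side is the tail of a convergent series, so it tends to $0$ as $M_0\to\infty$. The left-hand side does not depend on $M_0$, hence it equals $0$.

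\emph{Step 3 (the equivalent formulation).} A sample point $\omega$ lies in $\bigcap_{M}\bigcup_{m\ge M}\cE_m$ exactly when, for every $M$, there is some $m\ge M$ with $\omega\in\cE_m$. This is the same as saying that $\omega$ belongs to infinitely many of the $\cE_m$. The complement of this event therefore has probability one, which is the statement that almost surely only finitely many $\cE_m$ occur.

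There is no real obstacle here. The only point needing care is the correct translation of the limsup event into ``infinitely many $\cE_m$ occur'' in Step~3.

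In the application, I would take $\cE_m$ to be the event that $m$ has no representation $m=n_{f_0}+n_{f_1}+n_{f_2}$ with $(f_0,f_1,f_2)\in\cC_m'$. The triples in $\cC_m'$ share no polynomials by Lemma~\ref{lem:set:norepeated}, so these representation events are independent. Combining this with Lemmas~\ref{lem:sizeC} and~\ref{lem:prob:equal} gives
\[
\mathbb P(\cE_m)\le \big(1-q^{-(1-\alpha^4)k^2-O(k)}\big)^{q^{\beta k^2+O(k)}}.
\]
Since $\beta-(1-\alpha^4)=\gamma>0$, this bound is super-polynomially small in $m$, so $\sum_m\mathbb P(\cE_m)$ converges.
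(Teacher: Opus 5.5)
Your argument is correct. It is the standard tail-of-the-series proof (monotonicity plus countable subadditivity, followed by the translation of the limsup event into ``infinitely many $\cE_m$ occur''); the paper gives no proof of its own and simply cites Halberstam--Roth \cite[p.~135]{HR83}. Your application paragraph also matches the paper's. The one difference is that your $\cE_m$ (no representation coming from $\cC_m'$) contains the paper's $\cE_m$, so to reach the $3-\eta$ conclusion you still need to invoke \eqref{ineq:nf0}, i.e.\ $n_{f_0}\le m^{1-\eta}$.
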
 
	
		In our application, for each positive integer $m$, let $\cE_m$ be the event that there does not exist $s_1,s_2,s_3\in \cS$ such that $m=s_1+s_2+s_3$ with some $s_i\leq m^{1-\eta}$.
	
	For all sufficiently large $m$ and any $(f_0,f_1,f_2)\in \cC_m^{\prime}$, since $m=q^{k^2+O(k)}$, Lemma \ref{lem:size} and \eqref{eq:kl} give
\[
n_{f_0}=q^{k_{*}^2+O(k_{*})}=q^{\alpha^2 k^2+O(k)}=m^{\alpha^2+o(1)}.
\]
Therefore, since $\alpha^2<1-\eta$,  we obtain
\begin{align}\label{ineq:nf0}
n_{f_0}\le m^{1-\eta}.
\end{align}
In addition, the triples in $\cC_m^{\prime}$ do not share any polynomial. Hence the corresponding events
\[
\{n_{f_0}+n_{f_1}+n_{f_2}=m\},
\qquad (f_0,f_1,f_2)\in \cC_m^{\prime},
\]
are mutually independent. Therefore, by the definition of $\cE_m$, \eqref{ineq:nf0}, Lemmas \ref{lem:sizeC} and \ref{lem:prob:equal}, we obtain
\begin{align*}
	\mathbb P(\cE_m)
    &= \mathbb P\Big( \bigcap_{(f_0,f_1,f_2)\in \cF^3} \Big\{ m\neq n_{f_0}+n_{f_1}+n_{f_2}\ \text{or} \ \min_{0\le i\le 2} n_{f_i}> m^{1-\eta}\Big\} \Big) \\ 
    &\le \mathbb P\Big( \bigcap_{(f_0,f_1,f_2)\in \cC_m^{\prime}}\{ m\neq n_{f_0}+n_{f_1}+n_{f_2}\} \Big)\\
	&=\prod_{(f_0,f_1,f_2)\in \cC_m^{\prime}}(1-\mathbb P(\{ m= n_{f_0}+n_{f_1}+n_{f_2}\} ))\\
	&\le (1-q^{-(1-\alpha^4)k^2-O(k)})^{q^{\beta k^2+O(k)}}.
\end{align*}
Using the inequality $1-x\leq e^{-x}$,
\[
\mathbb P(\cE_m)
\le
e^{-q^{(\beta-(1-\alpha^4))k^2-O(k)}}.
\]
By \eqref{eq:beta-gamma},
\[
\beta-(1-\alpha^4)=c(2+\alpha^2)-1=\gamma>0.
\]
For all sufficiently large $m$, since $m=q^{k^2+O(k)}$, the failure probability is at most
$
e^{-m^{\gamma-o(1)}},
$
which implies that 
$$
\sum_{m=1}^{\infty}\mathbb P(\cE_m)<\infty.
$$
Therefore, by the Borel--Cantelli lemma, with probability 1, $\cS$ is an asymptotic basis of order $3-\eta$.

Since $\alpha^2=0.9473$ in \eqref{choice} and $1-\alpha^2=0.0527$, we obtain the desired conclusion for every
$0<\eta<0.0527$.

\section{Remarks and open problems}\label{sec:conclusion}
As shown by Pilatte \cite[Lemma 4.1]{Pi24}, the parameter $c$ satisfies
\[
1-c>\frac{c}{1-c},
\]
which implies that 
\[
0<c<\frac{3-\sqrt{5}}{2}.
\]
In our argument, the parameter $c$ in our construction is subject to the same upper bound since $c<(\alpha^2-c)(\alpha^4-c)<(1-c)^2$. And the range of $\eta$ is determined by the range of $\alpha^2$. Since $c(2+\alpha^2)>1$, the restriction on $c$ forces  $\eta$ to be small. Thus, extending the admissible range of $c$ while preserving the
asymptotic basis property would lead to a larger admissible range of
$\eta$. One possible approach is to use the deletion method in
Cilleruelo's construction \cite{Ci14} to improve the range of $c$.
However, such a deletion may remove some triples from $\cC_m'$, and it
seems difficult to obtain a sufficiently precise estimate for the
number of admissible triples that remain.			
            
Motivated by this $3-\eta$ version of the asymptotic Sidon basis problem, Csaba S\'andor asked the following more general problem:
\begin{problem}[$a+b+1$ version] 
Let $0\leq a\leq b\leq 1$ be real numbers satisfying  
$$
a+b>1. 
$$
Does there exist a Sidon set $\cS\subset\mathbb N$ such that every sufficiently large integer $m$ has a representation 
$$
m=s_1+s_2+s_3,\qquad s_1,s_2,s_3\in \cS, 
$$
with
$$ s_1\ll m^a,\qquad s_2\ll m^b,\qquad s_3\asymp m? 
$$
\end{problem}

The range $a+b>1$ is the natural necessary condition suggested by the
classical upper bound $\cS(x)\ll x^{1/2}$ for any infinite Sidon set $\cS$. 
And the
range $a+b>\sqrt{2}$ may be a more realistic first target from Ruzsa's exponent $\sqrt{2}-1$ for dense infinite Sidon sequences \cite{Ru98}. In addition, applying our truncated construction should give a result for a small range of $a$ and $b$. 

We also propose the following problem, in the spirit of
additive problems with almost equal summands.
\begin{problem}[Almost equal summands version]
Does there exist a constant $0<\theta<1$ and an infinite Sidon set
$\cS\subset\mathbb N$ such that every sufficiently large integer $m$
can be written as
\[
    m=s_1+s_2+s_3,
    \qquad s_1,s_2,s_3\in\cS,
\]
with
\[
    \left|s_i-m/3\right|
    \ll m^\theta
    \qquad (1\leq i\leq 3)?
\]
More generally, determine the smallest admissible value of $\theta$.
\end{problem}

Unfortunately, our present construction cannot force all three summands to lie in short intervals around $m/3$.
			
We also mention related works on $B_h[g]$-sets, also known as
generalized Sidon sets. A set $\cA\subset\N$ is called a $B_h[g]$-set if every positive integer has at most $g$ different representations of the form
\[
n=a_1+\cdots+a_h,
\ \
a_1\leq\cdots\leq a_h,
\ \
a_1,\ldots,a_h\in\cA.
\]
In particular, $B_2[1]$-sets are precisely Sidon sets, and one may ask similar questions to those on Sidon sets as above.
Kiss and S\'andor \cite{KS21} proved that for every $h\geq 2$, there exists a
$B_h[1]$-set which is an asymptotic basis of order $2h+1$. They later
improved the order from $2h+1$ to $2h$ in \cite{KS25}, and also asked the following problems:
\begin{problem}
Determine the smallest value of $k = k(h)\geq h$ for which there exists a
$B_h[1]$-set which is an asymptotic basis of order $k$.
\end{problem}

\begin{problem}
Determine the smallest value of $g = g(h)$ for which there exists an asymptotic
basis of order $h + 1$ which is a $B_h[g]$-set.
\end{problem}

Pilatte \cite{Pi24} mentioned that a simple modification of his proof could guarantee $k(h)\leq 2h-1$. It should be challenging to know if this could be improved for larger $h\geq 3$, in view of the fact that the asymptotic expected (maximum) size of a $B_h[1]$-set in $[1,N]$ is $N^{1/h}.$

\section{Acknowledgments} 
The author would like to express his sincere gratitude to his advisors Professor Ping Xi and Professor P\'eter P\'al Pach for their support and guidance.
The author would also like to thank Professor Ping Xi for suggesting the $3-\eta$ problem and for helpful discussions, thank Yuchen Ding and Xiamiao Zhao for suggestions and discussions and thank Csaba S\'andor for suggesting the $a+b+1$ problem. 
This work is supported in part by Shaanxi NSF (No. 2025JC-QYCX-002), Shaanxi Fundamental Science Research Project for Mathematics and Physics (No.25JSZ007) and China Scholarship Council.

	\bibliographystyle{plain}

\end{document}